\documentclass[reqno, 12pt]{amsart}
\usepackage{newunicodechar}
\newunicodechar{，}{,}

\usepackage[utf8]{inputenc}
\usepackage[english]{babel}
\usepackage{amsmath,amssymb,amsfonts,amsthm,mathtools}
\usepackage{enumitem}
\usepackage{booktabs}
\usepackage{microtype}
\usepackage{hyperref}
\usepackage[nameinlink,capitalise]{cleveref}
\usepackage[margin=1.15in]{geometry}

\hypersetup{colorlinks=true,linkcolor=black,citecolor=black,urlcolor=black}
\numberwithin{equation}{section}

\theoremstyle{plain}
\newtheorem{theorem}{Theorem}[section]
\newtheorem{lemma}[theorem]{Lemma}
\newtheorem{proposition}[theorem]{Proposition}
\newtheorem{corollary}[theorem]{Corollary}

\theoremstyle{definition}
\newtheorem{definition}[theorem]{Definition}
\newtheorem{example}[theorem]{Example}

\theoremstyle{remark}
\newtheorem{remark}[theorem]{Remark}

\makeatletter

\let\original@ps@headings\ps@headings
\let\original@ps@firstpage\ps@firstpage

\def\ps@headings{%
  \original@ps@headings
  \def\@evenhead{\normalfont\scriptsize
    \hfil\leftmark\hfil}%
  \def\@oddhead{\normalfont\scriptsize
    \hfil\rightmark\hfil}%
  \def\@oddfoot{\hfil\thepage\hfil}%
  \let\@evenfoot\@oddfoot
}

\def\ps@firstpage{%
  \original@ps@firstpage
  \let\@oddhead\@empty
  \let\@evenhead\@empty
  \def\@oddfoot{\hfil\thepage\hfil}%
  \let\@evenfoot\@oddfoot
}

\makeatother

\begin{document}

\title[Generalized Mock-Poisson Algebras]{Generalized Mock-Poisson Algebras: Constructions and Structural Properties}

\author[Zafar Normatov]{Zafar Normatov}
\address{Zafar Normatov \newline \indent School of Mathematics, Jilin University, Changchun, 130012, China,\newline \indent
Institute of Mathematics, Uzbekistan Academy of Sciences, Tashkent, 100174, Uzbekistan} \email{z.normatov@inbox.ru}
\date{}

\begin{abstract}

In this paper, we introduce generalized mock-Poisson algebras as a mock analogue of generalized Poisson algebras. We prove that $A_\ast^3$ is an ideal annihilated
by both operations and that the quotient
$A/A_\ast^3$ is a mock-Poisson algebra.
We also show that the dimension seven is the
smallest dimension in which a generalized
mock-Poisson algebra need not be a
mock-Poisson algebra. We give several constructions using derivations, anti-Zinbiel algebras, tensor products, and one-dimensional extensions. We study polarization and depolarization and characterize the corresponding admissible algebras. We prove that every finite-dimensional generalized mock-Poisson algebra over a field of characteristic zero is nilpotent. Finally, we describe representations in terms of semidirect products.
\end{abstract}
\subjclass[2020]{17A30, 17B63, 17A36, 17A60}

\keywords{generalized mock-Poisson algebra; mock-Poisson algebra; anti-associative algebra; anti-Zinbiel algebra; representation; semidirect product; nilpotency; polarization.}

\maketitle


\section{Introduction}

Poisson algebras form an important class of algebraic structures with connections to several areas of mathematics and mathematical physics. More precisely, a Poisson algebra $(A,\cdot,[-,-])$ consists of a commutative associative algebra $(A,\cdot)$ and a Lie algebra $(A,[-,-])$ satisfying the Leibniz identity
\[
[xy,z]=x[y,z]+[x,z]y,
\]
for all $x,y,z\in A$. The compatibility between the associative product and the Lie bracket is a fundamental feature of Poisson algebras and has led to various generalizations and related algebraic structures \cite{ABCF, AOM, Agore, GozeRemm}.

A generalized Poisson algebra is obtained by replacing the Leibniz identity with
\[
[a\cdot b,c]
=
a\cdot[b,c]+[a,c]\cdot b+(a\cdot b)\cdot D(c),
\]
where $D$ is a derivation of both the associative commutative algebra $(A,\cdot)$ and the Lie algebra $(A,[-,-])$. Thus, this identity reduces to
the usual Leibniz identity when $D=0$, and a generalized Poisson algebra becomes
a Poisson algebra. Generalized Poisson algebras have appeared in connection with Jordan superalgebras, contact-type brackets, tensor product constructions and generalized Wronskian algebras \cite{Agore,CantariniKac,Herman}. In \cite{CNOR}, the authors studied several aspects of the structural theory of generalized Poisson algebras such as nilpotency and solvability, various constructions, polarization and depolarization, and low-dimensional classifications.

Recently, the notion of a \(\delta\)-mock-Poisson algebra was introduced as a parameter-dependent mock analogue of a Poisson algebra in \cite{RN}. For \(\delta=1\), one obtains mock-Poisson algebras. Unlike the classical Poisson setting, one operation is anti-commutative anti-associative, while the other is a mock-Lie bracket and the compatibility identity is the anti-Leibniz identity. Therefore, there is a natural question:
\textbf{\emph{Is there a mock analogue of a generalized Poisson algebra?}}
Motivated by this question, we introduce the following definition.
\begin{definition}
A \emph{generalized mock-Poisson algebra} is a quadruple $(A,\ast,\{-,-\},D)$ such that $(A,\ast)$ is anti-commutative and anti-associative, $(A,\{-,-\})$ is a mock-Lie algebra (see the definition in \cite{BF, zusmanovich}), $D$ is a derivation with respect to both operations and all $x,y,z\in A$ satisfy
\begin{equation}\label{eq:gmp-nil-compatibility}
\{x\ast y,z\}=-x\ast\{y,z\}-\{x,z\}\ast y+(x\ast y)\ast D(z).
\end{equation}
\end{definition}
Note that when $D=0$, this definition recovers the mock-Poisson algebra.

We begin by studying the basic properties of generalized mock-Poisson algebras. We prove that $A_\ast^3$ is an ideal annihilated by both operations and that
the quotient $A/A_\ast^3$ is a mock-Poisson algebra. We also show that if $(x\ast y)\ast D(z)\neq 0$ for some $x,y,z\in A$, then $\dim A\geq 7$. An example in dimension seven shows that this bound
is sharp. Moreover, the underlying anti-commutative anti-associative algebra of a seven-dimensional generalized mock-Poisson algebra is unique up to isomorphism whenever $A_\ast^3\neq 0$.

We next give several constructions of generalized mock-Poisson algebras. These include constructions from derivations and anti-Zinbiel algebras as well as tensor products and one-dimensional extensions. We also study polarization and depolarization and obtain a characterization in terms of a single bilinear operation.

Then we study the nilpotency property of generalized mock-Poisson algebras. Although the algebra is nilpotent with respect to each operation separately in the finite-dimensional case, this does not immediately imply nilpotency for products involving both operations. We use the compatibility identity to study mixed products and prove that every finite-dimensional generalized mock-Poisson algebra over a field of characteristic zero is nilpotent. We also study representations of mock-Poisson and generalized mock-Poisson algebras using semidirect products.

The paper is organized as follows.
Section~\ref{sec2} contains the basic structural properties and constructions of generalized mock-Poisson algebras.
Section~\ref{sec3} deals with polarization and depolarization. Section~\ref{sec4} is devoted to nilpotency, while Section~\ref{sec5} studies representations
and semidirect products.

Throughout, all vector spaces and algebras are defined over a field of characteristic zero unless otherwise stated. 

\section{Basic structural properties and constructions}\label{sec2}

Recall that a subalgebra $\mathcal{B}$ of an algebra $(\mathcal{A},\star_1,\star_2)$ with two binary operations is defined as a linear space that is closed under both products i.e. $\mathcal{B}\star_1 \mathcal{B}+\mathcal{B}\star_2\mathcal{B}\subseteq\mathcal{B}$. A subalgebra $\mathcal{I}$ of $\mathcal{A}$ is called an ideal if it satisfies $\mathcal{I}\star_1\mathcal{A}+\mathcal{A}\star_1\mathcal{I}+\mathcal{I}\star_2\mathcal{A}+\mathcal{A}\star_2\mathcal{I}\subseteq\mathcal{I}$.

Let $(A,\ast)$ be an anti-commutative anti-associative algebra. Set
$A_\ast^1=A$ and
\[
A_\ast^n=\sum_{i=1}^{n-1}A_\ast^i\ast A_\ast^{n-i},\qquad n\ge2.
\]
Then $A_\ast^2=\operatorname{span}\{x\ast y\ | \ \forall x,y\in A\}$ and 
$A_\ast^3=\operatorname{span}\{(x\ast y)\ast z\ | \forall x,y,z\in A\}$ by anti-associativity. By \cite{R22}, $A_\ast^4=0$.
Note that $A_\ast^2\ast A_\ast^2\subseteq A_\ast^4$.

\begin{proposition}\label{prop:central-defect}
Let $\mathcal A=(A,\ast,\{-,-\},D)$ be a generalized mock-Poisson algebra. Then
\[
\{A_\ast^3,A\}=0,
\qquad
A_\ast^2\ast\{A,A\}=0,
\]
and $A_\ast^3$ is a $D$-stable ideal annihilated by both binary operations. Moreover,  $\mathcal A/A_\ast^3$ is a mock-Poisson algebra.
\end{proposition}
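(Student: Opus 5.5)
The plan is to reduce everything to the second claim $A_\ast^2\ast\{A,A\}=0$, and to prove that claim by expanding mixed brackets of products with the compatibility identity \eqref{eq:gmp-nil-compatibility}. The basic tool is a symmetry of triple products. In an anti-commutative anti-associative algebra,
\[
(a\ast b)\ast c=-a\ast(b\ast c)=(b\ast c)\ast a .
\]
So the trilinear map $\tau(a,b,c)=(a\ast b)\ast c$ is invariant under cyclic permutations and antisymmetric in $a,b$. Hence $\tau$ is fully alternating. Together with $A_\ast^4=0$ from \cite{R22}, this means any product with four or more factors vanishes. It also kills every term $(x\ast y)\ast D(\cdots)$ of the form $A_\ast^2\ast A_\ast^2$, and every $D$-term that gets multiplied once more by $\ast$.

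\emph{Step 1: the first claim follows from the second.} Apply \eqref{eq:gmp-nil-compatibility} to $\{(x\ast y)\ast z,u\}$. Then expand $\{x\ast y,u\}$ by \eqref{eq:gmp-nil-compatibility} again and discard the $A_\ast^4$ terms. This gives
\[
\{(x\ast y)\ast z,u\}=-(x\ast y)\ast\{z,u\}+(x\ast\{y,u\})\ast z+(\{x,u\}\ast y)\ast z .
\]
By the full alternation of $\tau$, each term on the right equals $\pm(a\ast b)\ast\{c,d\}$ for suitable $a,b,c,d$. So all three terms lie in $A_\ast^2\ast\{A,A\}$, and $\{A_\ast^3,A\}=0$ follows once the second claim is known.

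\emph{Step 2: proving $T(a,b;c,d):=(a\ast b)\ast\{c,d\}=0$. I expect this to be the main obstacle.} $T$ is alternating in $a,b$ and symmetric in $c,d$, and it also satisfies the identities inherited from $\tau$. The idea is to compute $\{x\ast y,z\ast u\}$ in two ways, using the commutativity of the mock-Lie bracket. In the first expansion, $x\ast y$ plays the role of the product in \eqref{eq:gmp-nil-compatibility}, and the inner brackets $\{y,z\ast u\}$ and $\{x,z\ast u\}$ are expanded once more. In the second expansion, the roles of $x\ast y$ and $z\ast u$ are swapped. All $D$-terms vanish: for example
\[
(x\ast y)\ast D(z\ast u)\in A_\ast^2\ast A_\ast^2\subseteq A_\ast^4=0,
\]
and $x\ast\bigl((z\ast u)\ast D(y)\bigr)\in A_\ast^4=0$. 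Each expansion therefore becomes a signed sum of values of $T$. Equating the two expansions, rewriting everything in a normal form via the alternation of $\tau$, and varying the roles of the four arguments should yield a linear system forcing $T=0$. Solving that system may require dividing by small integers such as $2$ or $3$, which is permitted in characteristic zero. If this system turns out not to be enough, I would add two further sources of relations. The first is to expand $\{(x\ast y)\ast z,u\}$ as in Step 1 for each of the three cyclic groupings of the same element $(x\ast y)\ast z=(y\ast z)\ast x=(z\ast x)\ast y$, and compare. The second is to use the mock-Lie Jacobi identity on brackets of the form $\{c,\{d,e\}\}$ occurring inside $T$.

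\emph{Step 3: ideal and quotient.} Since $A_\ast^3\ast A\subseteq A_\ast^4=0$ and $\{A_\ast^3,A\}=0$, the space $A_\ast^3$ is an ideal annihilated by both operations. Because $D$ is a derivation of $\ast$, we have $D((x\ast y)\ast z)\in A_\ast^3$, so $A_\ast^3$ is $D$-stable. Hence $\ast$, $\{-,-\}$ and $D$ all descend to $A/A_\ast^3$. Anti-commutativity, anti-associativity, the mock-Lie identities and the derivation property pass to the quotient. In the quotient, the term $(x\ast y)\ast D(z)$ lies in $A_\ast^3$ and becomes zero. Therefore \eqref{eq:gmp-nil-compatibility} reduces to the anti-Leibniz identity, and $\mathcal A/A_\ast^3$ is a mock-Poisson algebra.
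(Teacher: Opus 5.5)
Your Steps 1 and 3 are fine, and Step 1 is exactly the identity the paper derives first. The gap is Step 2: the route you commit to there cannot produce $T=0$.

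Fix four arguments $x_1,\dots,x_4$ and put $T_{ij}:=(x_i\ast x_j)\ast\{x_k,x_l\}$, where $\{k,l\}$ is the complementary pair. Then $T_{ji}=-T_{ij}$, so there are six unknowns. Carrying out your first expansion gives
\[
\{x_1\ast x_2,x_3\ast x_4\}=T_{14}-T_{13}+T_{23}-T_{24}.
\]
Your second expansion is the substitution $1,2,3,4\mapsto 3,4,1,2$, and it gives exactly the negative of this. So commutativity of the bracket yields only $T_{14}-T_{13}+T_{23}-T_{24}=0$, i.e.\ $\{A_\ast^2,A_\ast^2\}=0$. Varying the arguments gives $T_{ad}+T_{db}+T_{bc}+T_{ca}=0$ for all distinct $a,b,c,d$. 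Viewed as an alternating function on the edges of $K_4$, $T$ must sum to zero around every $4$-cycle. But $T_{ij}=\phi_i-\phi_j$ satisfies all of these relations for arbitrary scalars $\phi_1,\dots,\phi_4$. So the system has a $3$-dimensional solution space, and no permutation of arguments or division by $2$ or $3$ forces $T=0$. The alternation of $\tau$ adds nothing beyond $T(a,b;c,d)=-T(b,a;c,d)$, because $\{c,d\}$ enters as a single argument. Your Jacobi fallback only produces degree-five relations, so it says nothing about $T$.

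The missing idea is your other fallback, and it is exactly the paper's argument. The paper compares the expansions of $\{(x\ast y)\ast z,t\}$ and $\{x\ast(y\ast z),t\}$, which is the same as using the grouping $(y\ast z)\ast x$. In the notation above, Step 1 gives $\{(x_1\ast x_2)\ast x_3,x_4\}=-T_{12}-T_{13}+T_{23}$. The grouping $(x_2\ast x_3)\ast x_1$ gives $T_{12}-T_{13}-T_{23}$. Equating the two yields $T_{12}=T_{23}$, i.e.\ $(x\ast y)\ast\{z,u\}=(y\ast z)\ast\{x,u\}$. Hence $h(x,y,z;u):=(x\ast y)\ast\{z,u\}$ is cyclic and antisymmetric in $x,y$, so it is alternating in $x,y,z$, while it is symmetric in $z,u$. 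Then
\[
h(x,y,z;u)=h(x,y,u;z)=-h(x,u,y;z)=-h(x,u,z;y)=h(x,z,u;y)=h(x,z,y;u)=-h(x,y,z;u)
\]
gives $h=0$ in characteristic $\neq 2$. This cyclic comparison, not the $\{x\ast y,z\ast u\}$ computation, has to be the core of Step 2. With it, your Steps 1 and 3 complete the proof.
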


\begin{proof}
Let us set $L(x,y,z,t):=\{(x\ast y)\ast z,t\}$. Applying \eqref{eq:gmp-nil-compatibility} to $(x\ast y,z,t)$ and then to $\{x\ast y,t\}$ with removing terms in $A_\ast^4$, we obtain
\begin{equation}\label{L}
L(x,y,z,t)=-(x\ast y)\ast\{z,t\}+(x\ast \{y,t\})\ast z+(\{x,t\}\ast y)\ast z.
\end{equation}
Applying \eqref{eq:gmp-nil-compatibility} to $(x,y\ast z,t)$ and then to $\{y\ast z,t\}$, we obtain
\begin{equation}\label{-L}
\begin{split}
-L(x,y,z,t)=&x\ast(y\ast \{z,t\})+x\ast (\{y,t\}\ast z)-\{x,t\}\ast(y\ast z)\\
=&-(x\ast y)\ast \{z,t\}-(x\ast \{y,t\})\ast z+(\{x,t\}\ast y)\ast z.
\end{split}
\end{equation}
By adding and subtracting  \eqref{L} and \eqref{-L}, we obtain
\[
(x\ast y)\ast \{z,t\}=(\{x,t\}\ast y)\ast z, \qquad \text{and} \qquad  L(x,y,z,t)=(x\ast\{y,t\})\ast z.
\]
Let us define $h(x,y,z;t):=(x\ast y)\ast \{z,t\}$. Then, $L=-h(x,z,y;t)$. Moreover, $h$ is antisymmetric in
$x,y,z$  and symmetric in $z,t$. Therefore, 
\[
\begin{split}
h(x,y,z;t)=&h(x,y,t;z)=-h(x,t,y;z)=-h(x,t,z;y)\\
&=h(x,z,t;y)=h(x,z,y;t)=-h(x,y,z;t)
\end{split}
\]
implies $h=0$. Hence, $(x\ast y)\ast \{z,t\}=0$ and $\{(x\ast y)\ast z,t\}=0$ i.e.\
$A_\ast^2\ast\{A,A\}=0$ and $\{A_\ast^3,A\}=0$.

Since $D$ is a derivation of $(A,\ast)$, by induction one can show that $D(A_\ast^i)\subseteq A_\ast^i$ i.e. $A_\ast^i$ is $D$-stable.
Moreover, $A_\ast^3\ast A=A_\ast^4=0$ and $\{A_\ast^3,A\}=0$, so $A_\ast^3$
is an ideal annihilated by both operations. In particular, $D$ induces a
well-defined map on $A/A_\ast^3$. Since the  term
$(x\ast y)\ast D(z)$ lies in $A_\ast^3$, it vanishes in the quotient. Hence the
induced operations satisfy the compatibility identity in the mock-Poisson algebra.
\end{proof}


\begin{proposition}\label{prop:dimension-seven}
Let $\mathcal A=(A,*,\{-,-\},D)$ be a finite-dimensional generalized mock-Poisson algebra. If $(x\ast y)\ast D(z)\neq0$ for some $x,y,z\in A$, then $\dim A\ge7$.
\end{proposition}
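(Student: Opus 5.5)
The plan is to reduce the statement to linear algebra on the generator space $V=A/A_\ast^2$ and on $A_\ast^2$, and then count dimensions.

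\emph{Step 1: the trilinear form.} Since $A_\ast^4=0$ and $(A,\ast)$ is anti-commutative and anti-associative, we have $(x\ast y)\ast z=-x\ast(y\ast z)=(y\ast z)\ast x$. So $\varphi(x,y,z):=(x\ast y)\ast z$ is antisymmetric in $x,y$ and cyclically invariant, hence fully alternating. It vanishes as soon as one argument lies in $A_\ast^2$, because $A_\ast^2\ast A_\ast^2\subseteq A_\ast^4=0$. Therefore $\varphi$ descends to an alternating map $\Lambda^3V\to A_\ast^3$. The derivation $D$ preserves $A_\ast^2$, as shown in the proof of Proposition~\ref{prop:central-defect}, so it induces $\bar D\in\operatorname{End}(V)$. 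The hypothesis then says that $\varphi(x,y,\bar Dz)\neq0$ for some $x,y,z$. In particular $\varphi\neq0$, so $n:=\dim V\ge3$, $\bar D\neq0$, and $A_\ast^3\neq0$.

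\emph{Step 2: constraints from the compatibility identity.} I will feed the identity \eqref{eq:gmp-nil-compatibility}, the mock-Jacobi identity, and the two vanishing results of Proposition~\ref{prop:central-defect} ($\{A_\ast^3,A\}=0$ and $A_\ast^2\ast\{A,A\}=0$) into the following manipulations:
\begin{itemize}
\item apply $\ast w$ to both sides of \eqref{eq:gmp-nil-compatibility};
\item apply $D$ to both sides of \eqref{eq:gmp-nil-compatibility} and use that $D$ is a derivation of both operations;
\item take cyclic sums over $x,y,z$.
\end{itemize}
The goal is an identity of the form
\[
\varphi(\bar Dx,y,z)+\varphi(x,\bar Dy,z)+\varphi(x,y,\bar Dz)=0,
\]
or a companion relation tying $\varphi(x,y,\bar Dz)$ to the mock-Lie bracket $\{A,A\}$ modulo $A_\ast^2$. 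Together with $A_\ast^2\ast\{A,A\}=0$, such relations say that $\bar D$ acts on the image of $\Lambda^3V$ in a constrained, trace-free way. They also force the brackets $\{x,z\}$, $\{y,z\}$ to be nonzero modulo suitable subspaces.

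\emph{Step 3: case analysis and counting.} Write $\dim A=n+\dim A_\ast^2$.
\begin{itemize}
\item If $n=3$, then $\Lambda^3V$ is one-dimensional. The cyclic relation gives $\operatorname{tr}(\bar D)\,\varphi=0$, while the hypothesis requires $\varphi(\cdot,\cdot,\bar D\cdot)\neq0$. I expect this to force a contradiction unless $\dim A_\ast^2\ge4$: one needs the three products $x\ast y$, $y\ast z$, $z\ast x$ to be independent, plus a nonzero $A_\ast^3$ outside their span, or an extra element carrying $D$-images. That gives $\dim A\ge7$.
\item If $n\ge4$, I aim to show $\dim A_\ast^2\ge3$. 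The vectors $x\ast y$, $x\ast\bar Dz$, $y\ast\bar Dz$ and a nonzero element of $A_\ast^3$ should be linearly independent, since otherwise alternation would kill $\varphi(x,y,\bar Dz)$.
\end{itemize}
In both cases the total is at least $7$. The seven-dimensional example announced in the introduction indicates the bound is tight, so the inequalities should be exactly what the relations give.

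The main obstacle is Step 2: extracting from the generalized Leibniz identity a clean relation on $\varphi(\cdot,\cdot,\bar D\cdot)$ that genuinely uses the mock-Lie bracket. I would first try the cyclic-sum approach, since it matches the alternating symmetry of $\varphi$. If that yields only a trivial identity, I would instead compute $\{x\ast y,z\}\ast w$ in two ways, once via \eqref{eq:gmp-nil-compatibility} and once via $D$-derivation applied to $\{x\ast y,z\}$, and compare.
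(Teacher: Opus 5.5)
There is a genuine gap, and it sits in Step~2, on which your $n=3$ branch (the extremal case) depends entirely. The identity you aim for cannot be proved because it is false in general. Since $D$ is a derivation of $\ast$, the expression $\varphi(\bar Dx,y,z)+\varphi(x,\bar Dy,z)+\varphi(x,y,\bar Dz)$ is exactly $D\bigl((x\ast y)\ast z\bigr)$, so your target amounts to $D(A_\ast^3)=0$. In the paper's seven-dimensional example ($F_3$ with the grading derivation and $\{u,v\}_D=D(u)\ast v+D(v)\ast u$) one has $\bar D=\operatorname{id}_V$ and $D=3\operatorname{id}$ on $A_\ast^3$. Hence $\operatorname{tr}(\bar D)\varphi=3\varphi\neq0$, which contradicts the relation you want to use. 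The other manipulations you list do not rescue this. Applying $D$ to \eqref{eq:gmp-nil-compatibility} just reproduces the sum of the compatibility identities at $(Dx,y,z)$, $(x,Dy,z)$ and $(x,y,Dz)$. Multiplying by $w$ kills the $D$-term, since it lands in $A_\ast^4=0$. So as written you have no valid argument for $n=3$, even though the count you guess there (three independent products plus a nonzero $A_\ast^3$) is the right one.

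The missing observation is that neither the bracket nor $D$ is needed beyond guaranteeing $A_\ast^3\neq0$. The independence argument you sketch in the $n\ge4$ branch works for every $n$, and it is exactly the paper's proof.
\begin{itemize}
\item Set $z'=D(z)$, so that $\varphi(x,y,z')\neq0$. By your Step~1, $x,y,z'$ are linearly independent modulo $A_\ast^2$.
\item Suppose $\alpha\,x\ast y+\beta\,y\ast z'+\gamma\,z'\ast x\in A_\ast^3$. Right-multiply by $z'$, $x$ and $y$ in turn.
\item Because $A_\ast^3\ast A\subseteq A_\ast^4=0$ and $\varphi$ is alternating and cyclic, this gives $\alpha\varphi(x,y,z')=\beta\varphi(x,y,z')=\gamma\varphi(x,y,z')=0$, so $\alpha=\beta=\gamma=0$.
\item Hence $\dim A_\ast^2/A_\ast^3\ge3$ and $\dim A_\ast^3\ge1$, so $\dim A\ge3+3+1=7$.
\end{itemize}
No case split on $n$ is needed. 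In particular, your $n\ge4$ branch already yields $\dim A_\ast^2\ge4$, more than you asked for. You should drop Step~2 and the case analysis.
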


\begin{proof}
The hypothesis implies $A_\ast^3\neq0$, so there are $x,y,z\in A$ with
$f(x,y,z)=(x\ast y)\ast z\neq0$, where $f$ is totally antisymmetric.
Then $x,y,z$ are linearly independent modulo $A_\ast^2$. Because if $x,y,z$ are linearly dependent modulo $A_\ast^2$, we may assume
$z=\alpha x+\beta y+w$ with $w\in A_\ast^2$. Then
\[
f(x,y,z)=\alpha f(x,y,x)+\beta f(x,y,y)+f(x,y,w)=(x\ast y)\ast w\in A_\ast^2\ast A_\ast^2\subseteq A_\ast^4=0,
\]
which is a contradiction. Similarly, we can show that the elements $x\ast y,\ y\ast z,\ z\ast x$ are linearly
independent modulo $A_\ast^3$. Indeed, if
\[
\alpha(x\ast y)+\beta(y\ast z)
+\gamma(z\ast x)\in A_\ast^3.
\]
Multiplying by $z$ and using $A_\ast^4=0$,
we obtain $\alpha(x\ast y)\ast z=0$. Since $(x\ast y)\ast z\neq0$, we get
$\alpha=0$. Multiplying by $x$
and $y$ gives $\beta=\gamma=0$. Hence
\[
\dim A=\dim(A/A_\ast^2)+\dim(A_\ast^2/A_\ast^3)+\dim A_\ast^3\ge3+3+1=7.
\]
\end{proof}

\begin{proposition}
\label{prop:unique-seven-dimensional}
Let $(A,*)$ be a seven-dimensional anti-commutative
anti-associative algebra over a field of characteristic zero
with $A_*^3\neq 0$. Then $A$ is isomorphic to the free
anti-commutative anti-associative algebra $F_3$ on three
generators. In particular, such an algebra is unique up to
isomorphism.
\end{proposition}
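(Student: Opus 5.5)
The plan is to reuse the dimension count from Proposition~\ref{prop:dimension-seven} and show that when $\dim A=7$ all the inequalities there are equalities, so that the structure constants are forced.

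\textbf{Step 1: a basis adapted to the filtration.} Since $A_\ast^3\neq0$, pick $x,y,z$ with $(x\ast y)\ast z\neq0$. As in the proof of Proposition~\ref{prop:dimension-seven}:
\begin{itemize}[label={}]
\item $x,y,z$ are independent modulo $A_\ast^2$;
\item $x\ast y,\ y\ast z,\ z\ast x$ are independent modulo $A_\ast^3$;
\item $A_\ast^3\neq0$.
\end{itemize}
Since $3+3+1=7=\dim A$, each graded piece has exactly the minimal dimension:
\[
\dim A/A_\ast^2=3,\qquad \dim A_\ast^2/A_\ast^3=3,\qquad \dim A_\ast^3=1 .
\]
Set $e_1=x$, $e_2=y$, $e_3=z$, $e_4=x\ast y$, $e_5=y\ast z$, $e_6=z\ast x$ and $e_7=(x\ast y)\ast z$. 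Then $e_1,\dots,e_7$ is a basis of $A$.

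\textbf{Step 2: the products are forced.} The products $e_i\ast e_j$ with $i,j\le3$ are fixed by definition and anti-commutativity. Using $A_\ast^2\ast A_\ast^2\subseteq A_\ast^4=0$ and $A_\ast^3\ast A=0$, every product involving $e_7$ vanishes, as does every product of two of $e_4,e_5,e_6$.

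For the products $e_i\ast e_j$ with $i\le3$ and $4\le j\le6$, write $f(a,b,c)=(a\ast b)\ast c$. Anti-associativity together with anti-commutativity makes $f$ totally antisymmetric, as noted earlier. Hence $(y\ast z)\ast x=(z\ast x)\ast y=e_7$. Moreover $(x\ast y)\ast x=f(x,y,x)=0$, and similarly all products with a repeated generator vanish.

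\textbf{Step 3: $A$ is generated by $x,y,z$.} One must check that $A$ is really spanned by $x,y,z$ and their products, that is, that $A_\ast^2$ is spanned by products of the chosen generators. A general element of $A_\ast^2$ is $a\ast b$ with $a,b\in A$. Writing $a,b$ modulo $A_\ast^2$ in terms of $x,y,z$, and using $A\ast A_\ast^2\subseteq A_\ast^3$, shows that $a\ast b$ lies in $\operatorname{span}(e_4,e_5,e_6)+A_\ast^3$. Then $A_\ast^3=\operatorname{span}(e_7)$, which is already forced by the dimension count. So the multiplication table on $e_1,\dots,e_7$ is completely determined, and no choice was made beyond picking $x,y,z$.

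\textbf{Step 4: identification with $F_3$.} Let $F_3$ be the free anti-commutative anti-associative algebra on three generators. By the same reasoning, since $F_3^4=0$, it is spanned by the generators, their three pairwise products, and the single triple product. Its dimension is at most $7$, and it is exactly $7$ because the multiplication table of Step~2 defines an algebra satisfying the identities. The universal property gives a homomorphism $F_3\to A$ sending the generators to $x,y,z$. This map is surjective by Step~3, and both spaces have dimension $7$, so it is an isomorphism.

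\textbf{Main obstacle.} The delicate step is verifying that the explicit 7-dimensional table satisfies anti-associativity. This establishes $\dim F_3=7$ rather than something smaller, for example that $(x\ast y)\ast z$ is not forced to vanish. The check is nonetheless short: a product of three elements is nonzero only when its three factors are $x,y,z$ in some order, each occurring once. In that case the identity $(a\ast b)\ast c=-a\ast(b\ast c)$ reduces to the sign rule coming from the total antisymmetry of $f$, which is verified on the six permutations.
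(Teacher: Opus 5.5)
Your proof is correct and follows the paper's route. The dimension count forces $A=\operatorname{span}\{x,y,z,x\ast y,y\ast z,z\ast x,(x\ast y)\ast z\}$, so the universal map $F_3\to A$ is surjective, and equality of dimensions makes it an isomorphism. You also justify $\dim F_3=7$, which the paper only asserts; note that $\dim F_3\ge7$ already follows from the surjection onto $A$ itself, so the anti-associativity check of the table in your ``main obstacle'' paragraph is not needed.
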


\begin{proof}
Since $A_\ast^3\neq0$, we can choose
$x,y,z\in A$ such that $(x\ast y)\ast z\neq0$.
As shown in the proof of
Proposition~\ref{prop:dimension-seven},
the elements $x,y,z$ are linearly independent
modulo $A_\ast^2$, and
$x\ast y$, $y\ast z$, $z\ast x$ are linearly
independent modulo $A_\ast^3$.

Since $\dim A=7$, it follows that
\[
A=\operatorname{span}\{x,y,z\}+A_\ast^2,
\quad
A_\ast^2=
\operatorname{span}\{x\ast y,y\ast z,z\ast x\}
+A_\ast^3,
\quad
A_\ast^3=\operatorname{span}\{(x\ast y)\ast z\}.
\]
Thus, $A$ is generated by $x,y,z$.

By the universal property of $F_3$, there is
a surjective homomorphism $\varphi:F_3\to A$
sending the free generators to $x,y,z$.
Since $\dim F_3=7=\dim A$, the map
$\varphi$ is an isomorphism.
\end{proof}

\begin{corollary}\label{cor:unique-seven-gmp}
Let $\mathcal A=(A,*,\{-,-\},D)$ be a seven-dimensional generalized mock-Poisson algebra over a field of characteristic zero. If $(x\ast y)\ast D(z)\neq0$ for some $x,y,z\in A$, then its underlying anti-commutative anti-associative algebra $(A,\ast)$ is isomorphic to $F_3$.
\end{corollary}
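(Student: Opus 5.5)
The plan is to obtain the corollary by chaining Proposition~\ref{prop:dimension-seven} with Proposition~\ref{prop:unique-seven-dimensional}. The only thing to check is that the hypothesis $(x\ast y)\ast D(z)\neq 0$ gives the hypothesis $A_\ast^3\neq 0$ needed for the uniqueness statement.

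First, suppose $(x\ast y)\ast D(z)\neq0$ for some $x,y,z\in A$. By definition, $A_\ast^3=\operatorname{span}\{(u\ast v)\ast w\}$ with $u,v,w$ ranging over $A$. Taking $u=x$, $v=y$ and $w=D(z)\in A$ shows that $(x\ast y)\ast D(z)$ is a nonzero element of $A_\ast^3$, so $A_\ast^3\neq0$. This is consistent with Proposition~\ref{prop:dimension-seven}, which forces $\dim A\ge 7$, so the dimension-seven hypothesis is attainable; we do not need that bound beyond this observation.

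Second, forget the bracket and the derivation and regard $(A,\ast)$ as an anti-commutative anti-associative algebra. It has dimension seven, the ground field has characteristic zero, and $A_\ast^3\neq0$. Proposition~\ref{prop:unique-seven-dimensional} then gives $(A,\ast)\cong F_3$ directly.

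There is no real obstacle here. The only step needing any care is the observation that $D(z)$ lies in $A$, so the given product is of the generating form $(u\ast v)\ast w$ for $A_\ast^3$. The substantive work was already done in Propositions~\ref{prop:dimension-seven} and~\ref{prop:unique-seven-dimensional}.
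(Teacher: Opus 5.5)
Your proposal is correct and matches the paper's proof: the hypothesis places the nonzero element $(x\ast y)\ast D(z)$ in $A_\ast^2\ast A\subseteq A_\ast^3$, so $A_\ast^3\neq0$, and Proposition~\ref{prop:unique-seven-dimensional} then gives $(A,\ast)\cong F_3$. As you note, Proposition~\ref{prop:dimension-seven} is not needed for the argument.
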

\begin{proof}
The hypothesis implies $A_\ast^3\neq0$. The assertion follows immediately from Proposition~\ref{prop:unique-seven-dimensional}.
\end{proof}

\begin{corollary}\label{cor:low-dim-mock}
Every generalized mock-Poisson algebra of dimension at most six satisfies $(x\ast y)\ast D(z)=0$ for all $x,y,z\in A$ and hence is a mock-Poisson algebra with a common derivation $D$.
\end{corollary}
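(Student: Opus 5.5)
This follows directly from Proposition~\ref{prop:dimension-seven}, read in contrapositive form. Let $\mathcal A=(A,\ast,\{-,-\},D)$ be a generalized mock-Poisson algebra with $\dim A\le 6$. If there were $x,y,z\in A$ with $(x\ast y)\ast D(z)\neq 0$, then Proposition~\ref{prop:dimension-seven} would give $\dim A\ge 7$, a contradiction. Hence $(x\ast y)\ast D(z)=0$ for all $x,y,z\in A$. (Since $\dim A\le 6$ is finite, the finite-dimensionality hypothesis of that proposition is satisfied.)

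Next we substitute this vanishing into the compatibility identity \eqref{eq:gmp-nil-compatibility}. The last term drops out, and we obtain
\[
\{x\ast y,z\}=-x\ast\{y,z\}-\{x,z\}\ast y
\]
for all $x,y,z\in A$. This is exactly the anti-Leibniz compatibility of a mock-Poisson algebra, i.e.\ the identity obtained from the definition with $D=0$. The axioms on $(A,\ast)$ and $(A,\{-,-\})$ are unchanged. Therefore $(A,\ast,\{-,-\})$ is a mock-Poisson algebra.

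Finally, $D$ is by hypothesis a derivation of both operations, so it is a common derivation of this mock-Poisson algebra, which is the last assertion. There is no real obstacle here: all the work sits in Proposition~\ref{prop:dimension-seven}, and the only point to check is that the hypothesis of that proposition is met, which it is.
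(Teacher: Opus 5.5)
Your proposal is correct and is exactly the argument the paper intends. The paper states this corollary without proof, as the contrapositive of Proposition~\ref{prop:dimension-seven}. Once $(x\ast y)\ast D(z)$ vanishes, identity \eqref{eq:gmp-nil-compatibility} reduces to the anti-Leibniz compatibility, and $D$ remains a common derivation of both operations by hypothesis.
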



We now give a construction of generalized mock-Poisson algebras using derivations of anti-commutative anti-associative algebras.

\begin{theorem}\label{thm:derivation-construction}
Let $(A,\ast)$ be an anti-commutative anti-associative algebra and let $D$ be a derivation of $(A,\ast)$. Define a new bilinear operation as
\begin{equation*}
\{x,y\}_D:=D(x)\ast y+ D(y)\ast x.
\end{equation*}
Then $(A,\ast,\{-,-\}_D,D)$ is a generalized mock-Poisson algebra.
\end{theorem}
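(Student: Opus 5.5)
We have to check four things for $\{x,y\}_D$: it is commutative, it satisfies the mock-Lie (Jacobi) identity, $D$ is a derivation of it, and the compatibility identity \eqref{eq:gmp-nil-compatibility} holds. Throughout we use anti-commutativity, anti-associativity $(a\ast b)\ast c=-a\ast(b\ast c)$, the fact that triple products $(a\ast b)\ast c$ are totally antisymmetric, and $A_\ast^4=0$ from \cite{R22}.

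Commutativity is immediate from the definition. For the derivation property, expand $D\{x,y\}_D=D^2(x)\ast y+D(x)\ast D(y)+D^2(y)\ast x+D(y)\ast D(x)$. The middle terms cancel by anti-commutativity, leaving $\{D(x),y\}_D+\{x,D(y)\}_D$.

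For the mock-Lie identity $\{\{x,y\}_D,z\}_D+\text{cyclic}=0$, expand the expression. Every term has the form $(u\ast v)\ast w$ with some number of $D$'s distributed, or $D(u\ast v)\ast w$, which is expanded further by the derivation rule. A typical term is $D(D(x)\ast y)\ast z=(D^2(x)\ast y)\ast z+(D(x)\ast D(y))\ast z$. Collect the terms by their $D$-degree pattern, for example $(D^2x,y,z)$ and $(Dx,Dy,z)$, and use total antisymmetry of triple products. The terms with pattern $(D^2u,v,w)$ occur with opposite signs and cancel. For example, $(D^2x\ast y)\ast z$ comes from $\{\{x,y\},z\}$, while $(D^2x\ast z)\ast y$ comes from $\{\{z,x\},y\}$. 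For the terms $D(z)\ast(Dx\ast y+Dy\ast x)$, we rewrite them via anti-associativity and check that the patterns $(Dx,Dy,z)$ also cancel over the cyclic sum. This bookkeeping is routine but sign-sensitive.

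The compatibility identity is the main step. Compute the left side as
\[
\{x\ast y,z\}_D=D(x\ast y)\ast z+D(z)\ast(x\ast y)=(Dx\ast y)\ast z+(x\ast Dy)\ast z+Dz\ast(x\ast y).
\]
The right side is
\[
-x\ast(Dy\ast z+Dz\ast y)-(Dx\ast z+Dz\ast x)\ast y+(x\ast y)\ast Dz.
\]
Convert everything to the normal form $(a\ast b)\ast c$ and use total antisymmetry to write each term as $\pm(Dx\ast y)\ast z$, $\pm(x\ast Dy)\ast z$ or $\pm(x\ast y)\ast Dz$. Then compare coefficients. The extra term $(x\ast y)\ast D(z)$ is exactly what is needed to balance the coefficients of $(x\ast y)\ast Dz$.

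The main risk is a sign discrepancy in this comparison, or in the Jacobi computation. If such a discrepancy appears, the fallback is to note that the offending terms are triple products involving images of $D$. Proposition~\ref{prop:central-defect} is not available a priori, because it assumes the structure, so we would instead verify directly that the residual combination vanishes by total antisymmetry.
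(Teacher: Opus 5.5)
Your proposal is correct and follows essentially the same route as the paper. Both verify each axiom by direct expansion with anti-commutativity and anti-associativity: the $D^2$-terms and the remaining terms cancel pairwise in the cyclic Jacobi sum, and the extra term $(x\ast y)\ast D(z)$ balances the compatibility identity. The signs do work out, since both sides of the compatibility identity reduce to $(D(x)\ast y)\ast z+(x\ast D(y))\ast z-(x\ast y)\ast D(z)$, so your fallback plan and the appeal to $A_\ast^4=0$ are never needed.
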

\begin{proof}
By definition, it is obvious that $\{y,x\}_D=\{x,y\}_D$. The computation 
\[
\begin{split}
D\{x,y\}_D=&D(D(x)\ast y+ D(y)\ast x)\\
=&D^2(x)\ast y+D(x)\ast D(y)+D^2(y)\ast x+D(y)\ast D(x)\\
=&\{D(x),y\}_D+\{x,D(y)\}_D,
\end{split}
\]
implies that $D$ is a derivation of the bracket $\{-,-\}_D$.

Using anti-commutativity, we have
\begin{align*}
\{\{x,y\}_D,z\}_D
={}&(D^2(x)\ast y)\ast z+(D^2(y)\ast x)\ast z\\
&+D(z)\ast (D(x)\ast y)+D(z)\ast (D(y)\ast x).
\end{align*}
By anti-commutativity and anti-associativity, in the cyclic sum, the terms containing $D^2$ cancel pairwise, and the remaining terms cancel pairwise as well. Hence $\{-,-\}_D$ satisfies the Jacobi identity.

Now we check compatibility condition. We have
\begin{align*}
\{x\ast y,z\}_D
&=(D(x)\ast y)\ast z+(x\ast D(y))\ast z+D(z)\ast (x\ast y).
\end{align*}
On the other hand,
\begin{align*}
-x\ast\{y,z\}_D&=(x\ast D(y))\ast z+D(z)\ast(x\ast y),\\
-\{x,z\}_D\ast y&=(D(x)\ast y)\ast z+D(z)\ast(x\ast y).
\end{align*}
Adding these identities and then adding $(x\ast y)\ast D(z)$ yields exactly $\{x\ast y,z\}_D$. This completes the proof.
\end{proof}

\begin{example}
Let $A$ be the three-dimensional anti-associative algebra with non-zero products
\[
e_1\ast e_2=e_3,
\qquad
e_2\ast e_1=-e_3.
\]
The derivation of $A$ is of the form:
\[
D(e_1)=\alpha_1e_1+\beta_1e_2+\gamma_1e_3,\qquad
D(e_2)=\alpha_2e_1+\beta_2e_2+\gamma_2e_3,\qquad
D(e_3)=(\alpha_1+\beta_2)e_3.
\]
Considering the bracket $\{x,y\}_D=D(x)\ast y+D(y)\ast x$, we obtain the following non-zero products
\[
\begin{split}
\{e_1,e_1\}=-2\beta_1e_3,\; \{e_1,e_2\}=(\alpha_1-\beta_2)e_3, \; \{e_2,e_2\}=2\alpha_2e_3.
\end{split}
\]
Consequently, $(A,\ast,\{-,-\}_D,D)$ is a generalized mock-Poisson algebra
and $(A,\ast,\{-,-\}_D)$ is also
a mock-Poisson algebra since $A_*^3=0$.
\end{example}

In the following example we will present a generalized mock-Poisson algebra which is not a mock-Poisson algebra.

\begin{example}\label{ex:nonzero-generalized-correction}
Let $A$ be the seven-dimensional anti-commutative anti-associative algebra with basis $\{e_1,\dots,e_7\}$ described in \cite[Remark~7]{AdashevKaygorodov}. It is free on the generators $e_1,e_2,e_3$, and its nonzero products, up to anti-commutativity, are
\[
e_1\ast e_2=e_4,\quad e_3\ast e_1=e_5,\quad e_2\ast e_3=e_6,\quad
e_4\ast e_3=e_5\ast e_2=e_6\ast e_1=-e_7.
\]
Define the grading derivation
\[
\begin{split}
&D(e_1)=e_1,\quad D(e_2)=e_2,\quad D(e_3)=e_3,\\
&D(e_4)=2e_4,\quad D(e_5)=2e_5,\quad D(e_6)=2e_6,
\quad D(e_7)=3e_7.    
\end{split}
\]
Then $D\in\operatorname{Der}(A,\ast)$. By Theorem~\ref{thm:derivation-construction}, $\{u,v\}_D=D(u)\ast v+D(v)\ast u$ defines a generalized mock-Poisson bracket. Its nonzero values on basis elements, up to symmetry, are
\[
\{e_3,e_4\}_D=-e_7,\qquad
\{e_1,e_6\}_D=-e_7,\qquad
\{e_2,e_5\}_D=-e_7.
\]
Then we have $(e_1\ast e_2)\ast D(e_3)=e_4\ast e_3=-e_7\neq0$. Therefore, the obtained generalized mock-Poisson
algebra is not a mock-Poisson algebra.
\end{example}

This example shows that the bound in
Proposition~\ref{prop:dimension-seven} is sharp. Thus, dimension seven is the smallest dimension in which a generalized mock-Poisson algebra is not necessarily a mock-Poisson algebra.

We now present another construction of generalized mock-Poisson algebras from anti-Zinbiel algebras. Recall that Zinbiel algebras were introduced as the Koszul dual counterpart of Leibniz algebras. Recently, authors of \cite{AdashevKaygorodov} introduced $\delta$-Zinbiel algebras by computing the quadratic dual of the operad governing $\delta$-Leibniz algebras. The specialization $\delta=-1$ gives the notion of an anti-Zinbiel algebra, Koszul dual to an anti-Leibniz algebra (for the definition see \cite{BCKM}). Hence, we have
\begin{definition}[\cite{AdashevKaygorodov}]
An algebra $(A,\circ)$ is called an \emph{anti-Zinbiel algebra} if
\begin{equation}\label{eq:id anti-zinbiel}
x\circ(y\circ z)=-(x\circ y)\circ z+x\circ(z\circ y)
\end{equation}
for all $x,y,z\in A$.
\end{definition}
\eqref{eq:id anti-zinbiel} is equivalent to
\begin{equation}\label{eq:anti-zinbiel-equivalent}
(x\circ y)\circ z=x\circ(z\circ y)-x\circ(y\circ z).
\end{equation}
Interchanging $y$ and $z$ in \eqref{eq:anti-zinbiel-equivalent} yields
\begin{equation}\label{eq:anti-right-commutative}
(x\circ y)\circ z=-(x\circ z)\circ y,
\end{equation}
so every anti-Zinbiel algebra is anti-right-commutative.

\begin{lemma}\label{lem:anti-zinbiel-commutator}
Let $(A,\circ)$ be an anti-Zinbiel algebra. Then the product
\begin{equation}\label{eq:anti-zinbiel-commutator}
x*y:=x\circ y-y\circ x
\end{equation}
is anti-commutative and anti-associative.
\end{lemma}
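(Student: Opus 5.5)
Anti-commutativity of $\ast$ is immediate from \eqref{eq:anti-zinbiel-commutator}, since $y\ast x=y\circ x-x\circ y=-x\ast y$. Anti-associativity means $(x\ast y)\ast z=-x\ast(y\ast z)$. I would prove it directly, by showing that
\[
S:=(x\ast y)\ast z+x\ast(y\ast z)
\]
vanishes. The only tools needed are the defining identity in the form \eqref{eq:anti-zinbiel-equivalent} and the anti-right-commutativity \eqref{eq:anti-right-commutative} derived from it.

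\textbf{Step 1: expand.} Expanding both terms in $\circ$ gives eight monomials:
\[
(x\ast y)\ast z=(x\circ y)\circ z-(y\circ x)\circ z-z\circ(x\circ y)+z\circ(y\circ x),
\]
\[
x\ast(y\ast z)=x\circ(y\circ z)-x\circ(z\circ y)-(y\circ z)\circ x+(z\circ y)\circ x .
\]

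\textbf{Step 2: eliminate the right-nested terms.} Group the four monomials of the form $u\circ(v\circ w)$ into two pairs. By \eqref{eq:anti-zinbiel-equivalent},
\[
x\circ(y\circ z)-x\circ(z\circ y)=-(x\circ y)\circ z .
\]
The same identity with $(z,y,x)$ in place of $(x,y,z)$ gives
\[
z\circ(y\circ x)-z\circ(x\circ y)=-(z\circ y)\circ x .
\]
Substituting these, $S$ becomes a combination of left-nested monomials only:
\[
S=(x\circ y)\circ z-(y\circ x)\circ z-(z\circ y)\circ x-(x\circ y)\circ z-(y\circ z)\circ x+(z\circ y)\circ x .
\]

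\textbf{Step 3: cancel.} Most terms cancel in pairs, leaving
\[
S=-\bigl((y\circ x)\circ z+(y\circ z)\circ x\bigr).
\]
This is zero by \eqref{eq:anti-right-commutative}, which completes the proof.

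The only real obstacle is bookkeeping. One must choose the pairing in Step 2 so that \eqref{eq:anti-zinbiel-equivalent} applies with the correct orientation; pairing the terms with the same outer left factor ($x$, respectively $z$) does this. After that, everything reduces to one application of anti-right-commutativity, and no characteristic assumption is needed.
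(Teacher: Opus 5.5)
Your proof is correct and takes essentially the paper's approach: expand in $\circ$, use \eqref{eq:anti-zinbiel-equivalent} to rewrite the right-nested monomials, and finish with anti-right-commutativity \eqref{eq:anti-right-commutative}. The paper rewrites $(x*y)*z$ step by step until it equals $-x*(y*z)$, rather than showing that the sum vanishes, but this is only a difference in bookkeeping.
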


\begin{proof}
Anti-commutativity is immediate. Using \eqref{eq:anti-zinbiel-commutator}, we have
\[
\begin{aligned}
(x*y)*z
={}&(x\circ y)\circ z-(y\circ x)\circ z-z\circ(x\circ y)+z\circ(y\circ x)\\
\stackrel{\eqref{eq:anti-zinbiel-equivalent}}{=}&x\circ (z\circ y)-x\circ(y\circ z)-(y\circ x)\circ z+(z\circ x)\circ y-z\circ(y\circ x)+z\circ(y\circ x)\\
\stackrel{\eqref{eq:anti-right-commutative}}{=}&x\circ (z\circ y)-x\circ(y\circ z)+(y\circ z)\circ x-(z\circ y)\circ x\\
={}&-x*(y*z).
\end{aligned}
\]
This completes the proof.
\end{proof}

Note that a derivation $D$ of an  anti-Zinbiel algebra $(A,\circ)$ is compatible with this commutator construction i.e.
\begin{equation}\label{eq:derivation with *}
\begin{split}
D(x*y)
&=D(x\circ y-y\circ x)=D(x)\circ y+x\circ D(y)
  -D(y)\circ x-y\circ D(x)\\
&=D(x)*y+x*D(y).
\end{split}    
\end{equation}

\begin{proposition}
Let $(A,\circ)$ be an anti-Zinbiel algebra and let $D$ be a derivation of $(A,\circ)$. Define
\[
x*y=x\circ y-y\circ x, \qquad \{x,y\}_D
=
D(x)*y+D(y)*x.
\]
Then $(A,*,\{-,-\}_D,D)$ is a generalized mock-Poisson algebra.
\end{proposition}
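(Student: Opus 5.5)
The plan is to reduce the statement to Theorem~\ref{thm:derivation-construction}; no new computation should be needed.

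First, by Lemma~\ref{lem:anti-zinbiel-commutator}, the commutator product $x*y=x\circ y-y\circ x$ makes $(A,*)$ an anti-commutative anti-associative algebra.

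Second, I check that $D$ is a derivation of $(A,*)$. This is exactly the computation \eqref{eq:derivation with *}: expand $D(x\circ y)$ and $D(y\circ x)$ with the Leibniz rule for $\circ$ and regroup the four terms as $D(x)*y+x*D(y)$.

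Third, the bracket $\{x,y\}_D=D(x)*y+D(y)*x$ is precisely the bracket of Theorem~\ref{thm:derivation-construction}, formed from the anti-commutative anti-associative algebra $(A,*)$ and its derivation $D$. That theorem then gives all the required properties at once:
\begin{itemize}
\item $\{-,-\}_D$ is a mock-Lie bracket, i.e.\ it is commutative and satisfies the Jacobi identity;
\item $D$ is a derivation of $\{-,-\}_D$, and $D$ is a derivation of $*$ by the second step;
\item the compatibility identity \eqref{eq:gmp-nil-compatibility} holds.
\end{itemize}
Hence $(A,*,\{-,-\}_D,D)$ is a generalized mock-Poisson algebra.

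There is no real obstacle, since all the substantive work is already done in Lemma~\ref{lem:anti-zinbiel-commutator} and Theorem~\ref{thm:derivation-construction}. The only point to be careful about is that the hypothesis concerns derivations of $(A,\circ)$, whereas the theorem needs a derivation of $(A,*)$. The second step supplies exactly this, and it uses only linearity of $D$. Theorem~\ref{thm:derivation-construction} uses nothing beyond anti-commutativity, anti-associativity and the derivation property, so it applies verbatim.
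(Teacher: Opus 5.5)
Your proposal is correct and follows the paper's proof exactly: Lemma~\ref{lem:anti-zinbiel-commutator} makes $(A,*)$ anti-commutative and anti-associative, the commutator computation shows $D$ is a derivation of $*$, and Theorem~\ref{thm:derivation-construction} then gives the result. One wording slip: that commutator computation uses the Leibniz rule for $\circ$ together with linearity of $D$, not linearity alone, as your own second step in fact states.
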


\begin{proof}
By Lemma~\ref{lem:anti-zinbiel-commutator}, $(A,*)$ is an anti-commutative anti-associative algebra. \eqref{eq:derivation with *} shows that $D$ is a derivation of $(A,*)$. The assertion therefore follows directly from Theorem~\ref{thm:derivation-construction}.
\end{proof}

\subsection{A unified tensor product construction}

The tensor product constructions for generalized Poisson and generalized mock-Poisson algebras admit a common sign-parametrized formulation. For this purpose, we set $\varepsilon\in\{1,-1\}$.

We say that a quadruple $(A,\cdot,[-,-],D)$ is a generalized $\varepsilon$-Poisson algebra if
\begin{align*}
xy=\varepsilon\,yx, \quad (xy)z=\varepsilon\,x(yz),\quad  [x,y]=-\varepsilon\,[y,x],
\end{align*}
the bracket satisfies the Jacobi identity
\begin{equation*}
[[x,y],z]+[[y,z],x]+[[z,x],y]=0,
\end{equation*}
$D$ is a derivation of both operations, and
\begin{equation}\label{eq:delta-generalized-poisson}
[xy,z]=\varepsilon\,x[y,z]+\varepsilon\,[x,z]y+(xy)D(z)
\end{equation}
for all $x,y,z\in A$.

Thus, for $\varepsilon=1$, one recovers a generalized Poisson algebra, while for $\varepsilon=-1$, one obtains a generalized mock-Poisson algebra.

Similarly, for $\varepsilon\in\{1,-1\}$ let $(B,\circ)$ be an $\varepsilon$-commutative $\varepsilon$-associative algebra, that is
\begin{equation}\label{eq:epsilon-algebra}
a\circ b=\varepsilon\,b\circ a,
\qquad
(a\circ b)\circ c
=
\varepsilon\,a\circ(b\circ c)
\end{equation}
for all $a,b,c\in B$.

\begin{theorem}\label{thm:unified-tensor-product}
Let $(A,\cdot,[-,-],D)$ be a generalized $\varepsilon_1$-Poisson algebra and let $(B,\circ)$ be an $\varepsilon_2$-commutative $\varepsilon_2$-associative algebra, where $\varepsilon_1,\varepsilon_2\in\{1,-1\}$. Define on $A\otimes B$
\begin{align*}
(x\otimes a)\star(y\otimes b)
&:=(xy)\otimes(a\circ b),\\
[x\otimes a,y\otimes b]_{\otimes}
&:=[x,y]\otimes(a\circ b),\\
\widetilde D(x\otimes a)
&:=D(x)\otimes a.
\end{align*}
Then $(A\otimes B,\star,[-,-]_{\otimes},\widetilde D)$ is a generalized $(\varepsilon_1\varepsilon_2)$-Poisson algebra.
\end{theorem}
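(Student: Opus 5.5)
The plan is a direct verification on pure tensors, extended by bilinearity. Throughout we write $\varepsilon=\varepsilon_1\varepsilon_2$ and use $\varepsilon_i^2=1$. There are four groups of axioms to check: the symmetry and associativity conditions, the Jacobi identity, the derivation property of $\widetilde D$, and the compatibility identity \eqref{eq:delta-generalized-poisson}. In each case the $A$-factor contributes a sign $\varepsilon_1$ and the $B$-factor contributes a sign $\varepsilon_2$.

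First, the structural identities for $\star$ follow at once from those of $A$ and $B$:
\[
(x\otimes a)\star(y\otimes b)=(xy)\otimes(a\circ b)=\varepsilon_1\varepsilon_2\,(yx)\otimes(b\circ a),
\]
and associativity is handled the same way. For the bracket,
\[
[y\otimes b,x\otimes a]_\otimes=[y,x]\otimes(b\circ a)=(-\varepsilon_1[x,y])\otimes(\varepsilon_2\, a\circ b)=-\varepsilon\,[x\otimes a,y\otimes b]_\otimes .
\]
The derivation property is also immediate, since $\widetilde D$ acts only on the first factor and $D$ is a derivation of both operations on $A$. For example,
\[
\widetilde D\bigl((xy)\otimes(a\circ b)\bigr)=(D(x)y+xD(y))\otimes(a\circ b).
\]

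Next, the Jacobi identity. The three cyclic terms are
\[
[[x,y],z]\otimes((a\circ b)\circ c),\quad [[y,z],x]\otimes((b\circ c)\circ a),\quad [[z,x],y]\otimes((c\circ a)\circ b).
\]
The key step is to show that the three $B$-factors coincide. Using $\varepsilon_2$-commutativity and $\varepsilon_2$-associativity,
\[
(b\circ c)\circ a=\varepsilon_2\, a\circ(b\circ c)=(a\circ b)\circ c,
\]
and similarly
\[
(c\circ a)\circ b=\varepsilon_2\, b\circ(c\circ a)=\varepsilon_2^2\,(b\circ c)\circ a=(a\circ b)\circ c.
\]
The sum therefore factors as the Jacobi sum in $A$ tensored with $(a\circ b)\circ c$, and it vanishes.

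Finally, the compatibility identity, which I expect to be the main bookkeeping obstacle because the signs must match exactly. The left side is
\[
[(x\otimes a)\star(y\otimes b),z\otimes c]_\otimes=[xy,z]\otimes((a\circ b)\circ c),
\]
and we expand $[xy,z]$ by \eqref{eq:delta-generalized-poisson} for $A$. On the right side we have three terms:
\[
\varepsilon\,(x[y,z])\otimes(a\circ(b\circ c)),\qquad \varepsilon\,([x,z]y)\otimes((a\circ c)\circ b),\qquad ((xy)D(z))\otimes((a\circ b)\circ c).
\]
We then convert each $B$-factor to $(a\circ b)\circ c$:
\[
a\circ(b\circ c)=\varepsilon_2(a\circ b)\circ c,\qquad (a\circ c)\circ b=\varepsilon_2\, a\circ(c\circ b)=a\circ(b\circ c)=\varepsilon_2(a\circ b)\circ c.
\]
Hence the first two terms carry the total sign $\varepsilon\varepsilon_2=\varepsilon_1$, which matches \eqref{eq:delta-generalized-poisson} for $A$. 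The third term already carries the correct $B$-factor with no sign, so $(xy)D(z)\otimes(a\circ b)\circ c$ is exactly $\bigl((x\otimes a)\star(y\otimes b)\bigr)\star\widetilde D(z\otimes c)$. This completes the verification.
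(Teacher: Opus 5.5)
Your proof is correct and follows essentially the same route as the paper: verify each axiom on pure tensors, show that the three cyclic $B$-factors in the Jacobi sum all equal $(a\circ b)\circ c$, and rewrite $a\circ(b\circ c)$ and $(a\circ c)\circ b$ as $\varepsilon_2\,(a\circ b)\circ c$. With this rewriting, the signs in the compatibility identity combine to $\varepsilon_1\varepsilon_2\cdot\varepsilon_2=\varepsilon_1$, matching the identity in $A$, while the $D$-term needs no sign adjustment.
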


\begin{proof}
Let us set $X=x\otimes a,\; Y=y\otimes b,\; 
Z=z\otimes c$. Then, we have
\[
\begin{aligned}
Y\star X
&=(yx)\otimes(b\circ a)=(\varepsilon_1 xy)\otimes(\varepsilon_2 a\circ b)=\varepsilon_1\varepsilon_2\,X\star Y,
\end{aligned}
\]
and hence the tensor product multiplication is $(\varepsilon_1\varepsilon_2)$-commutative.

Similarly, we obtain
\[
\begin{aligned}
(X\star Y)\star Z&=((xy)z)\otimes((a\circ b)\circ c)=\varepsilon_1\varepsilon_2\,
  x(yz)\otimes a\circ(b\circ c)=\varepsilon_1\varepsilon_2\,X\star(Y\star Z).
\end{aligned}
\]
For the bracket, we have
\[
\begin{aligned}
[Y,X]_{\otimes}
&=[y,x]\otimes(b\circ a)=(-\varepsilon_1[x,y])\otimes
  (\varepsilon_2 a\circ b)=-\varepsilon_1\varepsilon_2\,[X,Y]_{\otimes}.
\end{aligned}
\]
Thus the tensor product bracket has precisely the symmetry corresponding to the parameter $\varepsilon_1\varepsilon_2$.

We next verify the Jacobi identity. From \eqref{eq:epsilon-algebra} (for $\varepsilon=\varepsilon_2$) and $\varepsilon_2^2=1$, one obtains
\begin{equation*}
(a\circ b)\circ c
=
(b\circ c)\circ a
=
(c\circ a)\circ b.
\end{equation*}
Consequently,
\[
\begin{aligned}
&[[X,Y]_{\otimes},Z]_{\otimes}
+[[Y,Z]_{\otimes},X]_{\otimes}
+[[Z,X]_{\otimes},Y]_{\otimes}\\
&\quad=
\bigl(
 [[x,y],z]+[[y,z],x]+[[z,x],y]
\bigr)
\otimes((a\circ b)\circ c)
=0.
\end{aligned}
\]

Since $D$ is a derivation of both operations on $A$, it follows immediately that
\[
\widetilde D(X\star Y)=\widetilde D(X)\star Y+X\star\widetilde D(Y)\quad \text{and}\quad \widetilde D[X,Y]_{\otimes}
=
[\widetilde D(X),Y]_{\otimes}
+
[X,\widetilde D(Y)]_{\otimes}.
\]
It remains to verify the compatibility condition. By \eqref{eq:delta-generalized-poisson},
\[
\begin{aligned}
[X\star Y,Z]_{\otimes}
={}&[xy,z]\otimes((a\circ b)\circ c)\\
={}&\bigl(\varepsilon_1 x[y,z]
+\varepsilon_1[x,z]y+(xy)D(z)\bigr)
\otimes((a\circ b)\circ c).
\end{aligned}
\]
By \eqref{eq:epsilon-algebra}, $a\circ(b\circ c)
=
\varepsilon_2\,(a\circ b)\circ c$ and $(a\circ c)\circ b
=
\varepsilon_2\,(a\circ b)\circ c$. Therefore,
\[
X\star[Y,Z]_{\otimes}=\varepsilon_2\,
x[y,z]\otimes((a\circ b)\circ c) \quad \text{and} \quad [X,Z]_{\otimes}\star Y
=\varepsilon_2\,[x,z]y\otimes((a\circ b)\circ c).
\]
Since $\varepsilon_2^2=1$, we obtain
\[
\begin{aligned}
[X\star Y,Z]_{\otimes}
={}&
\varepsilon_1\varepsilon_2\,
X\star[Y,Z]_{\otimes}+\varepsilon_1\varepsilon_2\,
[X,Z]_{\otimes}\star Y
+(X\star Y)\star\widetilde D(Z).
\end{aligned}
\]
This completes the proof.
\end{proof}

Let $(A_i,\ast_i,D_i)$, $i=1,2$, be anti-commutative anti-associative algebras equipped with derivations $D_i$. For $i=1,2$, define
\begin{equation*}
\{x,y\}_i=D_i(x)\ast_i y+D_i(y)\ast_i x.
\end{equation*}
Then $\mathcal A_i=(A_i,\ast_i,\{-,-\}_i,D_i)$, $i=1,2$, are generalized mock-Poisson algebras.

\begin{proposition}
On $A_1\otimes A_2$, define
\begin{align}\label{eq:tensor-special-product}
(x_1\otimes x_2)\cdot(y_1\otimes y_2)
={}&(x_1\ast_1y_1)\otimes(x_2\ast_2y_2),\\
[x_1\otimes x_2,y_1\otimes y_2]
={}&\{x_1,y_1\}_1\otimes(x_2\ast_2y_2)
+(x_1\ast_1y_1)\otimes\{x_2,y_2\}_2\nonumber\\
\widetilde D={}&D_1\otimes\operatorname{id}_{A_2}
+\operatorname{id}_{A_1}\otimes D_2.\nonumber
\end{align}
Then $\bigl(A_1\otimes A_2,\cdot,[-,-],\widetilde D\bigr)$ is a generalized Poisson algebra.
\end{proposition}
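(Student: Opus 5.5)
The plan is to reduce the statement to a classical fact about commutative associative algebras with a derivation, by showing that the bracket $[-,-]$ on $A_1\otimes A_2$ is itself of ``derivation type'' with respect to $\cdot$ and $\widetilde D$. Everything is bilinear, so it suffices to check all identities on pure tensors $X=x_1\otimes x_2$, $Y=y_1\otimes y_2$, $Z=z_1\otimes z_2$.

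\emph{Step 1: $(A_1\otimes A_2,\cdot)$ is commutative and associative, and $\widetilde D$ is a derivation of $\cdot$.} Here $\ast_1$ and $\ast_2$ are both anti-commutative and anti-associative, so the signs multiply to $(-1)(-1)=1$. This is exactly the sign computation in the proof of Theorem~\ref{thm:unified-tensor-product} with $\varepsilon_1=\varepsilon_2=-1$:
\[
Y\cdot X=(y_1\ast_1x_1)\otimes(y_2\ast_2x_2)=X\cdot Y,
\]
and similarly $(X\cdot Y)\cdot Z=X\cdot(Y\cdot Z)$. Since $D_i$ is a derivation of $\ast_i$, expanding $\widetilde D\bigl((x_1\ast_1y_1)\otimes(x_2\ast_2y_2)\bigr)$ gives four terms. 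These regroup as $\widetilde D(X)\cdot Y+X\cdot\widetilde D(Y)$.

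\emph{Step 2 (key identification):} I will show that
\[
[X,Y]=\widetilde D(X)\cdot Y-X\cdot\widetilde D(Y).
\]
Expand $\widetilde D(X)\cdot Y-\widetilde D(Y)\cdot X$ (equal to the right-hand side by commutativity) into four terms. Using $y_2\ast_2x_2=-x_2\ast_2y_2$ in the two terms coming from $\widetilde D(Y)\cdot X$, the terms carrying $D_1$ combine to $\{x_1,y_1\}_1\otimes(x_2\ast_2y_2)$. Using $y_1\ast_1x_1=-x_1\ast_1y_1$, the terms carrying $D_2$ combine to $(x_1\ast_1y_1)\otimes\{x_2,y_2\}_2$. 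This is precisely the defining formula of $[-,-]$. I expect this sign bookkeeping to be the only genuinely delicate point; the rest is formal.

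\emph{Step 3: the classical lemma.} Let $(C,\cdot)$ be commutative associative, let $D$ be a derivation, and set $[a,b]=D(a)b-aD(b)$. Then:
\begin{itemize}[leftmargin=*]
\item $[-,-]$ is skew-symmetric.
\item $D$ is a derivation of $[-,-]$, since $D[a,b]=D^2(a)b-aD^2(b)=[D(a),b]+[a,D(b)]$ (the $D(a)D(b)$ terms cancel).
\item The Jacobi identity holds. Expanding $[[a,b],c]=(D^2(a)b-aD^2(b))c-D(a)bD(c)+aD(b)D(c)$, the cyclic sum cancels separately among the $D^2$-terms and among the terms with two first derivatives.
\item The compatibility identity holds:
\[
[ab,c]=D(a)bc+aD(b)c-abD(c)=a[b,c]+[a,c]b+(ab)D(c).
\]
\end{itemize}
Applying this lemma to $C=A_1\otimes A_2$ with $D=\widetilde D$, using Steps 1 and 2, shows that $(A_1\otimes A_2,\cdot,[-,-],\widetilde D)$ is a generalized Poisson algebra, i.e.\ a generalized $\varepsilon$-Poisson algebra with $\varepsilon=1$.
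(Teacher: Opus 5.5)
Your proposal is correct and follows the paper's route. Like the paper, you show that $\cdot$ is commutative and associative with $\widetilde D$ a derivation, identify $[X,Y]=\widetilde D(X)\cdot Y-X\cdot\widetilde D(Y)$, and then apply the derivation-type construction of generalized Poisson algebras; the only difference is that you prove that last lemma directly, whereas the paper cites Theorem~3.13 of \cite{CNOR} for it.
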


\begin{proof}
From \eqref{eq:tensor-special-product}, it is obvious that $(A_1\otimes A_2,\cdot)$ is a commutative associative algebra.

Now we show that $\widetilde D$ is a derivation of  $(A_1\otimes A_2,\cdot)$. For $X=x_1\otimes x_2$ and $Y=y_1\otimes y_2$, we have
\[
\begin{aligned}
\widetilde D(X\cdot Y)
={}&
D_1(x_1\ast_1y_1)\otimes(x_2\ast_2y_2)+
(x_1\ast_1y_1)\otimes D_2(x_2\ast_2y_2)\\
={}&
(D_1(x_1)\ast_1y_1)\otimes(x_2\ast_2y_2)+
(x_1\ast_1D_1(y_1))\otimes(x_2\ast_2y_2)\\
&+
(x_1\ast_1y_1)\otimes(D_2(x_2)\ast_2y_2)+
(x_1\ast_1y_1)\otimes(x_2\ast_2D_2(y_2))\\
={}&
\widetilde D(X)\cdot Y+X\cdot\widetilde D(Y).
\end{aligned}
\]
Let us compute the bracket induced by $\widetilde D$. 
\[
\begin{aligned}
\widetilde D(X)\cdot Y
={}&
(D_1(x_1)\ast_1y_1)\otimes(x_2\ast_2y_2)+
(x_1\ast_1y_1)\otimes(D_2(x_2)\ast_2y_2),
\end{aligned}
\]
and
\[
\begin{aligned}
X\cdot\widetilde D(Y)
={}&
(x_1\ast_1D_1(y_1))\otimes(x_2\ast_2y_2)+
(x_1\ast_1y_1)\otimes(x_2\ast_2D_2(y_2)),
\end{aligned}
\]
derives
\begin{equation*}
\begin{split}
\widetilde D(X)\cdot Y-X\cdot\widetilde D(Y)
={}&
\bigl(
D_1(x_1)\ast_1y_1
-x_1\ast_1D_1(y_1)
\bigr)
\otimes(x_2\ast_2y_2)\\
&+
(x_1\ast_1y_1)\otimes
\bigl(
D_2(x_2)\ast_2y_2
-x_2\ast_2D_2(y_2)
\bigr)\\
={}&
\{x_1,y_1\}_1\otimes(x_2\ast_2y_2)+
(x_1\ast_1y_1)\otimes\{x_2,y_2\}_2\\
={}&[X,Y].
\end{split}
\end{equation*}
The other part of the proof follows from Theorem 3.13 in \cite{CNOR}.
\end{proof}

\subsection{One-dimensional extensions}

If commutative associative part of a generalized Poisson algebra is not unital, then it is possible to extend the algebra to have a unit element (see Proposition~3.9 and Theorem~3.12 in \cite{CNOR}). However, a nonzero anti-commutative algebra cannot possess a two-sided identity element, so the usual unitization procedure is not available in the mock setting. A natural replacement is to adjoin one distinguished element and control its interaction with the original algebra by linear operators.

Let $\mathcal A=(A,\ast,\{-,-\},D)$ be a generalized mock-Poisson algebra and
consider $A^{\sharp}=A\oplus\mathbb C e$. Let
$T,S\colon A\to A$ be linear maps, $a,u\in A$ and $\lambda\in\mathbb C$.
For all $x,y\in A$, we extend the operations of $A$ to $A^{\sharp}$ by setting
\begin{equation}\label{eq:extension-product}
\begin{split}
&x\ast_{\sharp}y=x\ast y, \qquad e\ast_{\sharp} x=T(x),\qquad
x\ast_{\sharp} e=-T(x),\qquad
e\ast_{\sharp} e=0,\\
&\{x,y\}_{\sharp}=\{x,y\}, \qquad \{e,x\}_{\sharp}=\{x,e\}_{\sharp}=S(x),
\qquad \{e,e\}_{\sharp}=a,\\
&D_{\sharp}(x)=D(x),\qquad
D_{\sharp}(e)=\lambda e+u.
\end{split}
\end{equation}

\begin{theorem}\label{thm:one-dimensional-extension}
With the notation above, $\mathcal A^{\sharp}=(A^{\sharp},\ast_{\sharp},\{-,-\}_{\sharp},D_{\sharp})$ is a generalized mock-Poisson algebra if and only if the following identities hold for all $x,y\in A$:
\begin{align}
T(x)\ast y
&=x\ast T(y)
=-T(x\ast y),
\label{eq:ext1}\\
T^2(x)&=0,
\label{eq:ext2}\\
S(\{x,y\})+\{S(x),y\}+\{S(y),x\}
&=0,
\label{eq:ext3}\\
\{a,x\}+2S^2(x)&=0,
\label{eq:ext4}\\
S(a)&=0,
\label{eq:ext5}\\
D(T(x))
&=T(D(x))+\lambda T(x)+u\ast x,
\label{eq:ext6}\\
D(S(x))
&=S(D(x))+\lambda S(x)+\{u,x\},
\label{eq:ext7}\\
D(a)&=2\lambda a+2S(u),
\label{eq:ext8}\\
\{T(x),y\}
&=-T(\{x,y\})-S(y)\ast x
  +T(x)\ast D(y),
\label{eq:ext9}\\
S(x\ast y)
&=-x\ast S(y)-S(x)\ast y
  -\lambda T(x\ast y)+(x\ast y)\ast u,
\label{eq:ext10}\\
S(T(x))
&=-T(S(x))-a\ast x+T(x)\ast u.
\label{eq:ext11}
\end{align}
\end{theorem}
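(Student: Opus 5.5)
The proof will be a direct verification: each defining axiom of a generalized mock-Poisson algebra is evaluated on $A^{\sharp}$ and sorted by how many arguments equal $e$. By multilinearity it suffices to take arguments in $A\cup\{e\}$. Whenever all arguments lie in $A$, the identity already holds because $\mathcal A$ is a generalized mock-Poisson algebra. Each axiom with at least one argument equal to $e$ then yields one condition on $(T,S,a,u,\lambda)$, and we show these conditions are exactly \eqref{eq:ext1}--\eqref{eq:ext11}. Since every step is an equivalence, the same computation gives both directions.

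\textbf{The $\ast_\sharp$-structure.} Anti-commutativity holds by construction, since $x\ast_\sharp e=-T(x)=-e\ast_\sharp x$ and $e\ast_\sharp e=0$. For anti-associativity, the triples $(e,x,y)$, $(x,e,y)$ and $(x,y,e)$ give relations among $T(x)\ast y$, $x\ast T(y)$ and $T(x\ast y)$. Using anti-commutativity, these reduce to \eqref{eq:ext1}. The triples with two copies of $e$, such as $(e,e,x)$, give $T^2=0$, which is \eqref{eq:ext2}. The triple $(e,e,e)$ is trivial.

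\textbf{The bracket and the derivation.} The mock-Lie bracket is symmetric by definition, so only the Jacobi identity needs checking. With one $e$, Jacobi gives \eqref{eq:ext3}. With two $e$'s, say $(e,e,x)$, it gives $\{a,x\}+2S^2(x)=0$, which is \eqref{eq:ext4}. With three $e$'s it gives $3S(a)=0$; since the characteristic is zero, this is \eqref{eq:ext5}. For the derivation, the condition $D_\sharp(e\ast_\sharp x)=D_\sharp(e)\ast_\sharp x+e\ast_\sharp D(x)$ expands to \eqref{eq:ext6}, using $u\ast_\sharp x=u\ast x$ and $e\ast_\sharp x=T(x)$. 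The analogous bracket conditions on $(e,x)$ and on $(e,e)$ give \eqref{eq:ext7} and \eqref{eq:ext8}; the factors $2$ in the latter come from symmetry.

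\textbf{The compatibility identity.} This is the main step, because \eqref{eq:gmp-nil-compatibility} is not symmetric in its three slots, so every position of $e$ must be treated separately.
\begin{itemize}
\item $(x,y,e)$ gives \eqref{eq:ext10}. The term $(x\ast y)\ast D_\sharp(e)$ contributes $-\lambda T(x\ast y)+(x\ast y)\ast u$.
\item $(e,y,z)$ gives \eqref{eq:ext9}.
\item $(x,e,z)$ should reduce to \eqref{eq:ext9} as well, after using anti-commutativity and \eqref{eq:ext1}.
\item $(e,e,z)$ is trivial, since $e\ast_\sharp e=0$ forces both sides to vanish once \eqref{eq:ext1} is applied.
\item $(e,x,e)$ and $(x,e,e)$ give \eqref{eq:ext11}. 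Here $D_\sharp(e)$ contributes $T(x)\ast u$ and the term $\lambda\,T(x)\ast_\sharp e$ vanishes by \eqref{eq:ext2}.
\item $(e,e,e)$ is trivial.
\end{itemize}
The obstacle is this bookkeeping. The check for $(x,e,z)$ must be done carefully, using \eqref{eq:ext1} to rewrite $x\ast T(\cdot)$ and $T(\cdot)\ast y$, so that it is seen to add nothing beyond \eqref{eq:ext9}. It must also be confirmed that each resulting condition is equivalent to, and not merely implied by, the listed identity. Once this is done, the collected conditions are exactly \eqref{eq:ext1}--\eqref{eq:ext11}, which proves the equivalence.
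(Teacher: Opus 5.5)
Your proposal is correct and follows the paper's proof: both evaluate anti-associativity, the Jacobi identity, the two derivation rules and the compatibility identity on tuples containing $e$, obtaining exactly \eqref{eq:ext1}--\eqref{eq:ext11}. The one overstatement is your repeated appeal to \eqref{eq:ext1}: since $\ast_\sharp$ is anti-commutative and $\{-,-\}_\sharp$ is symmetric, the compatibility identity is antisymmetric in its first two arguments, so the cases $(x,e,z)$ and $(x,e,e)$ are just the negatives of $(e,x,z)$ and $(e,x,e)$, and $(e,e,z)$ vanishes by anti-commutativity alone, with no use of \eqref{eq:ext1} (the $(x,e,z)$ computation contains no $x\ast T(\cdot)$ term).
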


\begin{proof}
Since $\mathcal A$ is already a generalized mock-Poisson algebra, by multilinearity it suffices to verify the defining identities on tuples containing the  element $e$.

Anti-commutativity follows directly from \eqref{eq:extension-product}. By considering the anti-associative product for the triples $\{e,x,y\}$, $\{x,e,y\}$, $\{e,e,x\}$ we obtain \eqref{eq:ext1} and \eqref{eq:ext2}. 

By the definition the bracket $\{-,-\}_{\sharp}$ is commutative. For the triple $\{e,x,y\}$, the Jacobi identity gives
\[
\{\{e,x\},y\}
+\{\{x,y\},e\}
+\{\{y,e\},x\}=0,
\]
and therefore
\[
S(\{x,y\})+\{S(x),y\}+\{S(y),x\}=0.
\]
For $\{e,e,x\}$ it gives $\{a,x\}+2S^2(x)=0$, and for $(e,e,e)$ it gives $S(a)=0$. Hence the extended bracket is a mock-Lie bracket if and only if \eqref{eq:ext3}--\eqref{eq:ext5} hold.

We now require $D_{\sharp}$ to be a derivation of the extended product. Applying the derivation identity to $e\ast_{\sharp} x=T(x)$ gives
\[
\begin{aligned}
D(T(x))
&=D_{\sharp}(e)\ast_{\sharp} x+e\ast_{\sharp} D(x)\\
&=(\lambda e+u)\ast_{\sharp} x+e\ast_{\sharp} D(x)\\
&=\lambda T(x)+u\ast x+T(D(x)),
\end{aligned}
\]
which is \eqref{eq:ext6}.

Similarly, applying the derivation identity for the bracket to $\{e,x\}_{\sharp}=S(x)$ yields
\[
\begin{aligned}
D(S(x))
&=\{D_{\sharp}(e),x\}_{\sharp}+\{e,D(x)\}_{\sharp}=\lambda S(x)+\{u,x\}+S(D(x)),
\end{aligned}
\]
which is \eqref{eq:ext7}. Applying it to $\{e,e\}_{\sharp}=a$ gives $D(a)=2\lambda a+2S(u)$ and hence \eqref{eq:ext8}.

It remains to consider the generalized mock-Poisson compatibility
\begin{equation}\label{eq:extended-gmp}
\{X\ast_{\sharp} Y,Z\}_{\sharp}
=
-X\ast_{\sharp}\{Y,Z\}_{\sharp}
-\{X,Z\}_{\sharp}\ast_{\sharp} Y
+(X\ast_{\sharp} Y)\ast_{\sharp} D_{\sharp}(Z).
\end{equation}

For the triple $\{e,x,y\}$, identity \eqref{eq:extended-gmp} becomes
\[
\{T(x),y\}=-T(\{x,y\})-S(y)\ast x+T(x)\ast D(y),
\]
which is \eqref{eq:ext9}. 

Similarly, the triples $\{x,y,e\}$ and $\{e,x,e\}$ yield
\eqref{eq:ext10} and \eqref{eq:ext11}, respectively, using \eqref{eq:ext2}.

By anti-commutativity of $\ast$ and symmetry of the bracket, the cases
$\{x,e,y\}$ and $\{x,e,e\}$ reduce to those already considered.
The cases $\{e,e,x\}$ and $\{e,e,e\}$ are automatic because $e\ast e=0$. Therefore, $\mathcal A^{\sharp}$ is a generalized mock-Poisson algebra if and only if \eqref{eq:ext1}--\eqref{eq:ext11} hold.
\end{proof}

\begin{corollary}
Let $\mathcal A=(A,\ast,\{-,-\},D)$ be a generalized mock-Poisson algebra. For every $\lambda\in\mathbb C$, the vector space $A^{\sharp}=A\oplus\mathbb C e$ becomes a generalized mock-Poisson algebra by setting
\[
e\ast A=0,\qquad
\{e,A\}=0,\qquad
\{e,e\}=0, \qquad 
D^{\sharp}|_A=D,
\qquad
D^{\sharp}(e)=\lambda e.
\]
\end{corollary}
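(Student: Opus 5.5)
The corollary is the special case $T=0$, $S=0$, $a=0$, $u=0$ of Theorem~\ref{thm:one-dimensional-extension}, with $\lambda$ arbitrary. With these choices the operations in \eqref{eq:extension-product} are exactly those of the statement: $e\ast_\sharp x=T(x)=0$, $\{e,x\}_\sharp=S(x)=0$, $\{e,e\}_\sharp=a=0$, $D_\sharp|_A=D$ and $D_\sharp(e)=\lambda e+u=\lambda e$. The plan is therefore to check that each of the conditions \eqref{eq:ext1}--\eqref{eq:ext11} holds for all $x,y\in A$, and then invoke the ``if'' direction of the theorem.

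The conditions \eqref{eq:ext1}--\eqref{eq:ext5} involve only $T$, $S$ and $a$, and each side is linear in these. For instance, \eqref{eq:ext1} reads $0=0=0$, and \eqref{eq:ext4} reads $\{0,x\}+0=0$. So all five hold trivially.

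The conditions \eqref{eq:ext6}--\eqref{eq:ext8} involve $\lambda$ and $u$. Here each term contains a factor $T$, $S$, $a$ or $u$:
\begin{itemize}
\item \eqref{eq:ext6} becomes $D(0)=0+\lambda\cdot 0+0\ast x$;
\item \eqref{eq:ext7} becomes $0=0+0+\{0,x\}$;
\item \eqref{eq:ext8} becomes $D(0)=0+2S(0)$.
\end{itemize}
All three hold since $D$ is linear and the products are bilinear.

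The remaining conditions are handled the same way. In \eqref{eq:ext9}, every term contains $T$ or $S$. In \eqref{eq:ext10}, every term contains $S$, $T$ or $u$. In \eqref{eq:ext11}, every term contains $S$, $T$, $a$ or $u$. Hence each is $0=0$.

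There is no real obstacle. The only point of care is that the term $(x\ast y)\ast u$ in \eqref{eq:ext10} vanishes precisely because $u=0$; this is why $D^\sharp(e)$ must be a pure multiple of $e$. It is also consistent with $\lambda$ being arbitrary: every occurrence of $\lambda$ in \eqref{eq:ext6}--\eqref{eq:ext10} is multiplied by $T$ or $a$, so no restriction on $\lambda$ arises. The corollary then follows from Theorem~\ref{thm:one-dimensional-extension}.
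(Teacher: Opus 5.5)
Your proposal is correct and follows the paper's proof exactly: the paper just says to take $T=S=0$ and $a=u=0$ in Theorem~\ref{thm:one-dimensional-extension}, and your term-by-term check of \eqref{eq:ext1}--\eqref{eq:ext11} spells out that substitution. Two of your side remarks are slightly off, though neither affects the argument: in \eqref{eq:ext7} the factor $\lambda$ multiplies $S(x)$ rather than $T$ or $a$, and $u=0$ is sufficient but not necessary, since any $u$ annihilated by both operations also satisfies \eqref{eq:ext6}, \eqref{eq:ext7} and \eqref{eq:ext10}.
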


\begin{proof}
Take $T=S=0$, $a=u=0$ in Theorem~\ref{thm:one-dimensional-extension}.
\end{proof}

\section{Polarization and depolarization}
\label{sec3}
Polarization and depolarization are fundamental tools in the study of nonassociative algebras, providing a natural framework for investigating their structures and the relationships between different algebraic systems. Polarization is a procedure that decomposes a single bilinear multiplication into its symmetric and skew-symmetric parts, thereby associating a two-operation algebra $(A,\bullet,[-,-])$ with a one-operation algebra $(A,\cdot)$. Conversely, depolarization combines two bilinear operations into a single multiplication, allowing the original algebraic structure to be described in terms of one operation. These techniques have been studied in various settings, including transposed Poisson algebras \cite{Djumadildaev1}, Poisson algebras \cite{GozeRemm}, generalized Poisson algebras \cite{Djumadildaev2}, and (transposed) $\delta$-mock-Poisson algebras \cite{RN}.

Recall that an algebra $(A,\circ)$ is called the mock-Poisson-admissible algebra if for
all $x, y, z \in A$ the following polynomial is zero:
\begin{align}
f(x,y,z):={}&3(x\circ y)\circ z-(y\circ x)\circ z
 +(x\circ z)\circ y \notag\\
&-(y\circ z)\circ x+(z\circ x)\circ y+3x\circ (y\circ z).
\label{eq:MP-polynomial}
\end{align}

Let $(A,\circ
,D)$ be an algebra with a derivation $D$.
Define
\[
x\ast y=\frac{x\circ y-y\circ x}{2},
\qquad
\{x,y\}=\frac{x\circ y+y\circ x}{2}.
\]

\begin{definition}
An algebra $(A,\circ,D)$ is called an
\emph{admissible generalized mock-Poisson algebra} if $D$ is a derivation and
\begin{equation}\label{eq:GMP-admissible}
\Psi(x,y,z):=f(x,y,z)
-4(x\ast y)\ast D(z)-2(y\ast z)\ast D(x)=0
\end{equation}
for all $x,y,z\in A$.
\end{definition}

\begin{theorem}\label{thm:GMP-polarization}
Let $(A,\circ,D)$ be an algebra with a derivation $D$.
Then $(A,\circ,D)$ is an admissible generalized
mock-Poisson algebra if and only if
$(A,\ast,\{-,-\},D)$ is a generalized
mock-Poisson algebra.
\end{theorem}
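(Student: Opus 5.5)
We reduce to the known non-generalized case and then track the extra $D$-terms. Write $x\circ y=x\ast y+\{x,y\}$. Since $D$ is a derivation of $\circ$ if and only if it is a derivation of both its skew-symmetric part $\ast$ and its symmetric part $\{-,-\}$, the derivation condition is the same on both sides of the equivalence. The work is to show that $\Psi\equiv0$ is equivalent to the three remaining conditions: anti-associativity of $\ast$, the mock-Jacobi identity for $\{-,-\}$, and the compatibility \eqref{eq:gmp-nil-compatibility}.

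For the direction ``generalized mock-Poisson $\Rightarrow$ admissible'', expand $f(x,y,z)$ by substituting $x\circ y=x\ast y+\{x,y\}$ into each of the six monomials. This gives a combination of terms of four types: $(x\ast y)\ast z$, $\{\{x,y\},z\}$, mixed terms $\{x\ast y,z\}$, and mixed terms $x\ast\{y,z\}$. Using anti-commutativity, anti-associativity, the Jacobi identity for the mock-Lie bracket, and the mock-Poisson compatibility, $f$ vanishes identically; this is the known mock-Poisson polarization result of \cite{RN} at $\delta=1$, and we redo that bookkeeping here. In the generalized setting the compatibility carries the additional term $(x\ast y)\ast D(z)$. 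Each mixed term $\{u\ast v,w\}$ appearing in the expansion therefore leaves a residue $(u\ast v)\ast D(w)$. Collecting these residues, with the coefficients inherited from $f$ and using anti-commutativity to normalise them, should yield exactly $4(x\ast y)\ast D(z)+2(y\ast z)\ast D(x)$. Then $\Psi=0$.

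For the converse, assume $\Psi\equiv0$ and decompose $\Psi$ by symmetry. Consider the components of $\Psi(x,y,z)$ under permutations of $x,y,z$, namely the totally symmetric part, the totally antisymmetric part and the parts symmetric or antisymmetric in pairs of variables. Each pure identity should appear, up to nonzero rational scalars, in one of these components. The totally symmetric component should isolate the Jacobi identity for $\{-,-\}$, since only bracket-bracket terms survive symmetrisation. A suitable alternating combination should isolate $(x\ast y)\ast z+x\ast(y\ast z)$, which is anti-associativity. The remaining components then give the generalized compatibility, including its $D$-term. Characteristic zero allows us to divide by the scalars that arise. Finally, the $D$-terms $(x\ast y)\ast D(z)$ must be checked against the forward computation, with particular care in the component that produces the compatibility identity.

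The main obstacle is this converse separation. We must verify that the specific coefficients $4$ and $2$ are precisely what makes the $D$-contributions assemble into the single term $(x\ast y)\ast D(z)$ of \eqref{eq:gmp-nil-compatibility}, and that they contaminate neither the anti-associativity component nor the Jacobi component. Note that $(x\ast y)\ast D(z)$ is not antisymmetric in all three variables, so some of it could leak into the other components. If it does leak, we would first establish anti-associativity and the Jacobi identity from components in which the $D$-terms cancel, for example by exploiting the antisymmetry in $x,y$. We would then use Proposition~\ref{prop:central-defect}-type relations, such as $A_\ast^2\ast\{A,A\}=0$, to discard any leftover $D$-terms.
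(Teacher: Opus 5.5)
Your handling of the derivation condition and of the forward direction is fine and matches the paper. After normalising, the only terms of type $\{u\ast v,w\}$ in $f$ are $4\{x\ast y,z\}+2\{y\ast z,x\}$. Rewriting them by \eqref{eq:gmp-nil-compatibility} leaves exactly the residue $4(x\ast y)\ast D(z)+2(y\ast z)\ast D(x)$, and everything else cancels by anti-associativity, the Jacobi identity and the $D$-free part of the compatibility.

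The converse, however, is only announced, and the mechanism you propose does not work as stated. The anti-associator $(x\ast y)\ast z+x\ast(y\ast z)=(x\ast y)\ast z-(y\ast z)\ast x$ has zero symmetric and zero alternating part under $S_3$. It therefore lies entirely in the standard isotypic component, which it shares with part of the compatibility identity. The compatibility identity is itself split between the standard component and the sign component; in the latter only its cyclic sum $\sum_{\mathrm{cyc}}\bigl(\{x\ast y,z\}-(x\ast y)\ast D(z)\bigr)$ survives. So neither an isotypic projection nor an ``alternating combination'' isolates anti-associativity or the compatibility identity. Your reason for the symmetric component is also off, though the conclusion is right: terms $\{x,y\}\ast z$ do survive symmetrisation, and they drop out of $\sum_\sigma\Psi^\sigma$ only because their coefficients $2,2,-4$ in $f$ sum to zero.

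What is missing is a set of specific group-algebra combinations, together with a check of their effect on the $D$-part $g_D=-4(x\ast y)\ast D(z)-2(y\ast z)\ast D(x)$ of $\Psi$. The paper uses
\begin{align*}
&\Psi(x,y,z)+\Psi(z,y,x)-\Psi(x,z,y)-\Psi(z,x,y)=12\bigl((x\ast y)\ast z+x\ast(y\ast z)\bigr),\\
&\textstyle\sum_{\sigma\in S_3}\Psi^\sigma(x,y,z)=12\bigl(\{\{x,y\},z\}+\{\{y,z\},x\}+\{\{z,x\},y\}\bigr),\\
&\Psi(x,y,z)-\Psi(y,x,z)-\Psi(z,y,x)+\Psi(x,z,y)-\Psi(y,z,x)+\Psi(z,x,y)\\
&\qquad=12\bigl(\{x\ast y,z\}+x\ast\{y,z\}+\{x,z\}\ast y-(x\ast y)\ast D(z)\bigr).
\end{align*}
The first two combinations annihilate $g_D$, and in the third $g_D$ contributes exactly $-12(x\ast y)\ast D(z)$. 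The first combination is $\Psi(x,y,z)-\Psi(x,z,y)$ symmetrised under $x\leftrightarrow z$; this is the precise form your ``pairwise symmetric/antisymmetric parts'' would have to take.

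Your fallback is not available. Proposition~\ref{prop:central-defect} is proved using \eqref{eq:gmp-nil-compatibility}, which is exactly what you are trying to derive, so invoking it here is circular. Moreover, $A_\ast^2\ast\{A,A\}=0$ says nothing about $(x\ast y)\ast D(z)$, which is nonzero in general (Example~\ref{ex:nonzero-generalized-correction}). The $D$-terms cannot be discarded; you must show, as above, that they do not leak.
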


\begin{proof}
Let us set
\begin{align*}
P_1(x,y,z)
&=4\big((x\ast y)\ast z
       +x\ast(y\ast z)\big),\\
P_2(x,y,z)
&=4\big(\{\{x,y\},z\}
       +\{\{y,z\},x\}
       +\{\{z,x\},y\}\big),\\
P_3(x,y,z)
&=4\big(\{x\ast y,z\}
       +x\ast\{y,z\}
       +\{x,z\}\ast y\big).
\end{align*}
For the generalized compatibility condition, we write
\[
P_3^D(x,y,z)
=P_3(x,y,z)-4(x\ast y)\ast D(z).
\]

We use the polynomial identities for the 
mock-Poisson algebra polarization  \cite[Theorem 6.14]{RN}.

In particular,
\begin{align}
2f(x,y,z)=P_3(y,z,x)-2P_3(y,x,z)-2P_1(y,z,x)-2P_1(z,x,y)+P_2(z,x,y).
\label{eq:ordinary-inverse}
\end{align}

Since the product $\ast$ is anticommutative, we have
\begin{align*}
P_3(y,z,x)-2P_3(y,x,z)=&P_3^D(y,z,x)-2P_3^D(y,x,z)\\
&+4(y\ast z)\ast D(x)+8(x\ast y)\ast D(z).
\end{align*}

Therefore, \eqref{eq:ordinary-inverse}
yields
\begin{align}
2\Psi(x,y,z)
={}&P_3^D(y,z,x)-2P_3^D(y,x,z)-2P_1(y,z,x)-2P_1(z,x,y)+P_2(z,x,y).
\label{eq:generalized-inverse}
\end{align}

Thus, if the polarized structure is a generalized
mock-Poisson algebra, then
\[
P_1=P_2=P_3^D=0,
\]
and \eqref{eq:generalized-inverse} implies
$\Psi=0$.

Conversely, suppose that $\Psi=0$. For convenience, we write $g_D(x,y,z):=-4(x\ast y)\ast D(z)-2(y\ast z)\ast D(x)$. Since the product $\ast$ is anticommutative, we have
\begin{align*}
&g_D(x,y,z)+g_D(z,y,x)-g_D(x,z,y)-g_D(z,x,y)=0
\end{align*}
and
\[
\sum_{\sigma\in S_3}g_D^\sigma(x,y,z)=0.
\]
Moreover, a direct computation gives
\begin{align*}
&g_D(x,y,z)-g_D(y,x,z)-g_D(z,y,x)+g_D(x,z,y)-g_D(y,z,x)\\
&\quad+g_D(z,x,y)=-12(x\ast y)\ast D(z).
\end{align*}

Applying the corresponding identities from
\cite[Theorem 6.14]{RN} to $f=\Psi-g_D$, we obtain
\begin{align}
3P_1(x,y,z)
={}&\Psi(x,y,z)+\Psi(z,y,x)-\Psi(x,z,y)-\Psi(z,x,y),
\label{eq:P1-Psi}\\
3P_2(x,y,z)
={}&\sum_{\sigma\in S_3}\Psi^\sigma(x,y,z),
\label{eq:P2-Psi}\\
3P_3^D(x,y,z)
={}&\Psi(x,y,z)-\Psi(y,x,z)-\Psi(z,y,x)+\Psi(x,z,y)-\Psi(y,z,x)+\Psi(z,x,y).
\label{eq:P3-Psi}
\end{align}
Therefore $\Psi=0$ implies $P_1=P_2=P_3^D=0$.

Since $D$ is a derivation of $\circ$,
it is also a derivation of both polarized
operations. Hence
$(A,\ast,\{-,-\},D)$ is a generalized
mock-Poisson algebra.
\end{proof}

\section{Nilpotency of generalized mock-Poisson algebras}\label{sec4}

Nilpotency of anti-associative algebras (\cite[Proposition~2]{R22}) and mock-Lie algebras (\cite[Lemma~2.5]{BF}) leads to study the nilpotency of generalized mock-Poisson algebras. In general, nilpotency of two individual binary operations does not force the vanishing of arbitrarily long products that mix them. However, the compatibility identity \eqref{eq:gmp-nil-compatibility} provides the additional rewriting mechanism needed to control such products.

According to the notion of nilpotency for algebras with two operations given in \cite{ABCF}, we now write it for the generalized mock-Poisson algebras.

\begin{definition}
Let $\mathcal{A}=(A,\ast,\{-,-\},D)$ be a generalized mock-Poisson algebra. The \emph{descending series of powers} is the sequence of subspaces given by
\[
A^{\langle 1\rangle}=A, \quad
A^{\langle n\rangle}
=
\sum_{i=1}^{n-1}
\left(
A^{\langle i\rangle}\ast A^{\langle n-i\rangle}
+
\{A^{\langle i\rangle},A^{\langle n-i\rangle}\}
\right), \quad n\geq2.
\]
The algebra $\mathcal{A}$ is called nilpotent if there exists $N \geq 1$ such that $A^{\langle N\rangle}=0$. 
\end{definition}


By definition $A^{\langle n\rangle}$ is spanned by the values of all mixed non-associative monomials of degree $n$ in the operations $\ast$ and $\{-,-\}$ (the degree of a monomial being the number of its variables). Hence, $\mathcal A$ is nilpotent if and only if all mixed monomials of sufficiently large degree vanish. Similarly one defines the series $A_*^{n}$ and $A_{\{-,-\}}^{n}$ of the two operations separately. Note that the derivation $D$ plays no role in these definitions.

A monomial in which only the operation $\{-,-\}$ occurs will be called a \emph{pure bracket monomial}. A \emph{regular monomial} is a monomial built from pure bracket monomials by means of the operation $\ast$.

\begin{lemma}\label{lem:regular-form}
Let $\mathcal A=(A,\ast,\{-,-\},D)$ be a generalized mock-Poisson algebra. Then every mixed monomial of degree $n$ is a linear combination of regular monomials of degree $n$.
\end{lemma}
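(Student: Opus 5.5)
We argue by induction on the degree $n$, and for fixed degree by a secondary induction on the number of ``bad'' occurrences of the bracket. In a mixed monomial $m$, call an occurrence $\{u,v\}$ \emph{bad} if one of its arguments $u,v$ is itself a $\ast$-product, i.e.\ has the form $u_1\ast u_2$. A monomial is regular exactly when it has no bad occurrence: every maximal bracket-only subtree then sits on variables, and these pure bracket monomials are combined by $\ast$ only. For $n\le 2$ every monomial is already regular, which is the base case. The property of being a linear combination of regular monomials of degree $n$ is preserved under $\ast$ and under substitution. Hence it suffices to show that $\{u,v\}$ is such a combination whenever $u$ and $v$ are regular monomials with $\deg u+\deg v=n$.

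The key step is the rewriting rule supplied by \eqref{eq:gmp-nil-compatibility}. Suppose $u=u_1\ast u_2$ is regular; by the symmetry of the bracket we may assume it is the left argument that is a $\ast$-product. Then
\[
\{u_1\ast u_2,v\}=-u_1\ast\{u_2,v\}-\{u_1,v\}\ast u_2+(u_1\ast u_2)\ast D(v).
\]
In the first two terms the bracket has moved strictly down into factors of smaller degree. By the outer induction on degree, $\{u_2,v\}$ and $\{u_1,v\}$ are linear combinations of regular monomials of degrees less than $n$. Multiplying these by $u_1$ or $u_2$ with $\ast$ gives regular monomials of degree $n$.

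The main obstacle is the third term $(u_1\ast u_2)\ast D(v)$. Here $D(v)$ is not a monomial in the variables, so the degree bookkeeping breaks down. This term lies in $A_\ast^3$. By Proposition~\ref{prop:central-defect}, $A_\ast^3$ is annihilated by both operations, and moreover $A_\ast^4=0$. I would try first to treat $D(v)$ as a new ``variable'' of weight $\deg v$. Since $D$ is a derivation of both operations, $D(v)$ expands as a sum of monomials of the same shape as $v$ with one variable $x_i$ replaced by $D(x_i)$. Such a monomial is again regular in the enlarged alphabet $\{x_i,D(x_i)\}$, and the degree counts variables with $D(x_i)$ counting as one. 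Therefore $(u_1\ast u_2)\ast D(v)$ is a $\ast$-product of regular monomials and is regular of degree $n$.

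To make this clean, I would formulate the lemma for monomials whose letters are arbitrary elements of $A$. Then $D(x_i)$ is just another element, and the statement concerns the multilinear expressions evaluated on $A$. If this is not the intended reading, an alternative is to note that whenever $\deg(u_1\ast u_2)\ast D(v)\ge 4$ as a $\ast$-word, the term vanishes. The remaining low-degree cases would then be absorbed directly. Once the rule is applied repeatedly, the process terminates because each application strictly lowers the total degree of the arguments of every newly created bracket. Together with the inductive hypothesis, this completes the proof.
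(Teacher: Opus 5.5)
Your proposal is correct and follows the paper's proof. You reduce to $\{u,v\}$ with $u,v$ regular, rewrite $\{u_1\ast u_2,v\}$ with the compatibility identity, and handle $(u_1\ast u_2)\ast D(v)$ by expanding $D(v)$ via the derivation property into monomials of the same shape in letters $D(x_i)$, i.e.\ monomials evaluated on arbitrary elements of $A$, exactly as the paper does. Two differences, both harmless. You treat $\{u_1,v\}$ and $\{u_2,v\}$ directly by the induction on degree, which makes the paper's auxiliary induction on $c(R_1)+c(R_2)$ (and your own announced induction on bad occurrences) unnecessary. Your fallback via $A_\ast^4=0$ is not needed, and it could not work on its own, since $(u_1\ast u_2)\ast D(v)$ can be nonzero, as in the seven-dimensional example.
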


\begin{proof}
We argue by induction on the degree. For a monomial $M_1\ast M_2$ it is clear since by induction $M_1,M_2$ are combinations of regular monomials and $\ast$-products of regular monomials are regular. Now for a monomial $\{M_1,M_2\}$ it remains to consider $\{R_1,R_2\}$, where $R_1$ and $R_2$ are regular monomials. Let $c(R)$ be the number of $\ast$-operations occurring in $R$ outside of its pure bracket factors, and use induction on $c(R_1)+c(R_2)$. If $c(R_1)=c(R_2)=0$, then $R_1,R_2$ are pure bracket monomials, so $\{R_1,R_2\}$ is one as well. Otherwise, by symmetry of $\{-,-\}$ we may assume $R_1=S_1\ast S_2$ with regular $S_1,S_2$. By the compatibility condition \eqref{eq:gmp-nil-compatibility},
\[
\{S_1\ast S_2,R_2\}=-S_1\ast\{S_2,R_2\}-\{S_1,R_2\}\ast S_2+(S_1\ast S_2)\ast D(R_2).
\]
Since $c(S_2)+c(R_2)<c(R_1)+c(R_2)$ and $c(S_1)+c(R_2)<c(R_1)+c(R_2)$, the monomials $\{S_2,R_2\}$ and $\{S_1,R_2\}$ are combinations of regular monomials by the induction hypothesis, and so are their $\ast$-products with $S_1$, $S_2$. For the last term, since $D$ is a derivation of both operations $D(R_2)$ is a sum of monomials of the same shape as $R_2$. These are regular monomials of the same degree as $R_2$, and therefore $(S_1\ast S_2)\ast D(R_2)$ is a combination of regular monomials. All monomials obtained have degree $\deg R_1+\deg R_2$.
\end{proof}

\begin{theorem}\label{thm:gmp-nilpotent}
Every finite-dimensional generalized mock-Poisson algebra over a field of characteristic zero is nilpotent. More precisely, if every pure bracket
monomial of degree at least $s$ vanishes, then $A^{\langle 3s-2\rangle}=0$.
\end{theorem}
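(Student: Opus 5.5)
By Lemma~\ref{lem:regular-form}, it suffices to show that every regular monomial of degree at least $3s-2$ vanishes. Here $s$ is an integer such that all pure bracket monomials of degree $\ge s$ vanish. Such an $s$ exists because the finite-dimensional mock-Lie algebra $(A,\{-,-\})$ is nilpotent in characteristic zero \cite[Lemma~2.5]{BF}. Hence the first assertion follows from the second. So the plan is to prove the bound $A^{\langle 3s-2\rangle}=0$ for regular monomials. The bound comes from counting how many pure bracket factors a nonzero regular monomial can have, and how large each factor can be.

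First I would view a regular monomial $R$ as an $\ast$-monomial in pure bracket factors $B_1,\dots,B_k$, with $\deg R=\sum_i\deg B_i$. Each factor lies in $A=A_\ast^1$, and an $\ast$-product of $k$ elements lies in $A_\ast^k$. Since $A_\ast^4=0$ by \cite{R22}, we may assume $k\le 3$. If some factor has degree $\ge s$, it vanishes, so $R=0$. Otherwise every factor has degree $\le s-1$, and $\deg R\le 3(s-1)=3s-3$. This already gives $3s-2$ for free, provided regular monomials with $k\le 3$ really exhaust the nonzero cases.

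The point needing care is that Lemma~\ref{lem:regular-form} rewrites a monomial of degree $n$ into regular monomials of the same degree $n$. So every monomial of degree $n\ge 3s-2$ is a combination of regular monomials of degree $n$, and each of these vanishes by the count above: either $k\ge4$, or some factor has degree $\ge s$. I would also check that the counting uses only $A_\ast^4=0$ and nothing about factors being in $A_\ast^2$. It does, since $A_\ast^k$ is defined as the span of all products of $k$ elements in any bracketing.

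The main obstacle I expect is justifying that the induction in Lemma~\ref{lem:regular-form} terminates and preserves degree. In particular, the term $(S_1\ast S_2)\ast D(R_2)$ has $D$ applied, and $D$ preserves monomial shape because it is a derivation of both operations. Since that lemma is given, the theorem reduces to the pigeonhole count above. One could sharpen this using Proposition~\ref{prop:central-defect}: for $k=3$, the relation $A_\ast^2\ast\{A,A\}=0$ forces factors of degree one. I do not need this for the stated bound.
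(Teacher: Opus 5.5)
Your proposal is correct and follows the paper's proof essentially step for step. You reduce to regular monomials via Lemma~\ref{lem:regular-form}, use $A_\ast^4=0$ to allow at most three pure bracket factors, and bound each factor's degree by $s-1$ to get degree at most $3s-3$; the only difference is that you kill products of four or more factors via $A_\ast^k\subseteq A_\ast^4$ rather than the paper's rewriting into left-normed form, and the two are equivalent.
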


\begin{proof}
Let $\mathcal A=(A,\ast,\{-,-\},D)$ be a finite-dimensional generalized
mock-Poisson algebra. The mock-Lie algebra $(A,\{-,-\})$ is nilpotent by \cite[Lemma~2.5]{BF}, so there is $s\ge2$ such that every pure bracket monomial of degree $\ge s$ vanishes. Moreover, $A_{\ast}^{4}=0$ by \cite{R22}. 
By anti-associativity, every $\ast$-monomial
is equal to a left-normed one up to a sign.
Since $A_\ast^4=0$, every $\ast$-monomial
of degree at least four vanishes.

By Lemma~\ref{lem:regular-form}, every mixed monomial of degree $n$ is a linear combination of regular monomials of degree $n$. Let $N$ be a nonzero regular monomial of degree $n$, and denote its pure bracket factors by $M_1,\ldots,M_r$. Since $\ast$-monomials of degree $\ge4$ vanish, we necessarily
have $r\leq3$. On the other hand, every nonzero pure bracket factor satisfies
$\deg M_i\leq s-1$. Therefore,
\[
n=\deg N
=\sum_{i=1}^{r}\deg M_i
\leq r(s-1)
\leq3(s-1).
\]
Thus every regular monomial of degree greater than $3(s-1)$ vanishes, and hence so does every mixed monomial of degree greater than $3(s-1)$. Thus, $\mathcal A$ is nilpotent and its nilpotency index is at most $3(s-1)+1$.
\end{proof}

\begin{corollary}\label{cor:mp-nilpotent}
Every finite-dimensional mock-Poisson algebra $(A,\ast,\{-,-\})$ over a field of characteristic zero is nilpotent.
\end{corollary}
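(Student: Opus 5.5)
The plan is to reduce the corollary to Theorem~\ref{thm:gmp-nilpotent} by viewing a mock-Poisson algebra as a generalized mock-Poisson algebra with a trivial derivation. Given a finite-dimensional mock-Poisson algebra $(A,\ast,\{-,-\})$, I would take $D=0$ and consider the quadruple $(A,\ast,\{-,-\},0)$. The zero map is trivially a derivation of both operations. With $D=0$, the compatibility identity \eqref{eq:gmp-nil-compatibility} is exactly the anti-Leibniz identity of a mock-Poisson algebra, since the correction term $(x\ast y)\ast D(z)$ vanishes. This is the remark following the definition in the introduction, so $(A,\ast,\{-,-\},0)$ is a generalized mock-Poisson algebra.

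Next I would note that the descending series $A^{\langle n\rangle}$ involves only the two binary operations. As remarked after the definition of nilpotency, $D$ plays no role there. So nilpotency of $(A,\ast,\{-,-\},0)$ as a generalized mock-Poisson algebra is the same as nilpotency of $(A,\ast,\{-,-\})$ in the two-operation sense. Applying Theorem~\ref{thm:gmp-nilpotent} then gives $A^{\langle 3s-2\rangle}=0$, where $s$ is any integer such that all pure bracket monomials of degree at least $s$ vanish. Such an $s$ exists by nilpotency of the mock-Lie algebra $(A,\{-,-\})$ \cite[Lemma~2.5]{BF}.

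There is no genuine obstacle. The only point to check is that every ingredient of the theorem's proof survives with $D=0$. The $D(R_2)$ term in Lemma~\ref{lem:regular-form} simply disappears, and the bound $A_\ast^4=0$ from \cite{R22} concerns only $(A,\ast)$. Both are therefore unaffected.
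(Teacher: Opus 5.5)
Your proposal is correct and matches the paper's proof, which simply specializes Theorem~\ref{thm:gmp-nilpotent} to $D=0$. Your extra check that the $D(R_2)$ term in Lemma~\ref{lem:regular-form} drops out, and that $A_\ast^4=0$ depends only on $(A,\ast)$, is harmless but unnecessary: the theorem already applies verbatim to the quadruple $(A,\ast,\{-,-\},0)$.
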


\begin{proof}
This follows from Theorem~\ref{thm:gmp-nilpotent}
by taking $D=0$.
\end{proof}

\begin{remark}
Note that no nilpotency assumption on the derivation $D$ is required. Thus, in contrast with  generalized Poisson algebras, an Engel-type hypothesis is not needed in finite-dimensional generalized mock-Poisson algebras because nilpotency is automatic.
\end{remark}

\begin{corollary}\label{cor:flag}
Let $\mathcal A=(A,*,\{-,-\},D)$ be a finite-dimensional generalized mock-Poisson algebra over $\mathbb C$. There exists a $D$-stable flag
\[
0=A_0\subset A_1\subset\cdots\subset A_n=A,
\qquad \dim A_i=i,
\]
such that
\[
A\ast A_i+\{A,A_i\}\subseteq A_{i-1}
\qquad (1\le i\le n).
\]
Consequently, in a basis adapted to this flag all left multiplication operators for $*$ and all multiplication operators for $\{-,-\}$ are strictly upper triangular, while $D$ is upper triangular.
\end{corollary}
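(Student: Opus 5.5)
We build the flag from the top of the descending series of Theorem~\ref{thm:gmp-nilpotent} downward, refining by common eigenvectors of $D$. The key objects are the two-sided annihilator
\[
Z(\mathcal A)=\{z\in A \mid A\ast z=0,\ \{A,z\}=0\}
\]
and, more usefully, the last nonzero term $A^{\langle N-1\rangle}$ of the series, where $A^{\langle N\rangle}=0$. Since $A^{\langle N-1\rangle}\ast A+\{A^{\langle N-1\rangle},A\}\subseteq A^{\langle N\rangle}=0$, this subspace lies in $Z(\mathcal A)$. It is also $D$-stable. Indeed, $D$ is a derivation of both operations, so $D$ maps a mixed monomial of degree $n$ to a sum of mixed monomials of degree $n$; by induction $D(A^{\langle n\rangle})\subseteq A^{\langle n\rangle}$ for all $n$.

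The first step is to pick the line $A_1$. Work over $\mathbb C$ and take $A\ne0$. The subspace $A^{\langle N-1\rangle}$ is nonzero and $D$-stable, so $D$ restricted to it has an eigenvector $v$. Set $A_1=\mathbb C v$. Then $A_1$ is $D$-stable, and $A\ast A_1+\{A,A_1\}=0=A_0$. Because $A_1$ is annihilated by both operations, it is an ideal.

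The second step is to pass to the quotient and induct on $\dim A$. The quotient $\bar A=A/A_1$ inherits both operations and the induced derivation $\bar D$. All the defining identities (anti-commutativity, anti-associativity, the mock-Lie identities, the derivation rules and \eqref{eq:gmp-nil-compatibility}) pass to $\bar A$, so $\bar A$ is again a finite-dimensional generalized mock-Poisson algebra. By induction there is a $\bar D$-stable flag $0=\bar B_0\subset\cdots\subset\bar B_{n-1}=\bar A$ with
\[
\bar A\ast\bar B_j+\{\bar A,\bar B_j\}\subseteq\bar B_{j-1}.
\]
Let $A_{j+1}$ be the preimage of $\bar B_j$. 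Then $\dim A_{j+1}=j+1$ and $D(A_{j+1})\subseteq A_{j+1}$. For $i\ge2$ we get $A\ast A_i+\{A,A_i\}\subseteq A_{i-1}$, since both sides contain $A_1$ and the inclusion holds modulo $A_1$. Together with the first step this gives the required flag.

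The final step is the matrix statement. Choose a basis $e_1,\dots,e_n$ with $A_i=\operatorname{span}\{e_1,\dots,e_i\}$. For each fixed $x$, the operators $y\mapsto x\ast y$ and $y\mapsto\{x,y\}$ send $e_i$ into $A_{i-1}$, so they are strictly upper triangular. Right multiplications for $\ast$ are the negatives of these, so they are strictly upper triangular as well. The condition $D(A_i)\subseteq A_i$ means exactly that $D$ is upper triangular.

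The only real obstacle is the first step: we need a $D$-stable nonzero subspace on which both operations act by zero. Nilpotency (Theorem~\ref{thm:gmp-nilpotent}) together with the $D$-stability of the series supplies this, and algebraic closedness of $\mathbb C$ supplies the eigenvector.
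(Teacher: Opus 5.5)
Your proof is correct and follows the paper's argument: induct on $\dim A$, use nilpotency (Theorem~\ref{thm:gmp-nilpotent}) to get a nonzero $D$-stable subspace killed by both operations, take a $D$-eigenvector there as $A_1$, and pull back the flag of $A/A_1$. The only cosmetic difference is where you pick the eigenvector: the paper uses the full annihilator $\operatorname{Ann}(\mathcal A)$, which is $D$-stable because $x\ast D(z)=D(x\ast z)-D(x)\ast z$, whereas you use the last nonzero term $A^{\langle N-1\rangle}$ and check its $D$-stability from the derivation property; either choice works.
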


\begin{proof}
We argue by induction on $n=\dim A$. The case
$n=0$ is trivial. Suppose $n\ge1$.
By Theorem~\ref{thm:gmp-nilpotent}, $\mathcal A$
is nilpotent. Hence the last nonzero term
of its descending series is contained in
$\operatorname{Ann}(\mathcal A)$, so
$\operatorname{Ann}(\mathcal A)\neq0$.

Since $D$ is a derivation of both operations,
$\operatorname{Ann}(\mathcal A)$ is $D$-stable.
As $\mathbb C$ is algebraically closed,
$D|_{\operatorname{Ann}(\mathcal A)}$ has an
eigenvector $v\neq0$.
Set $A_1=\mathbb C v$. Then $A_1$ is a
$D$-stable ideal and
\[
A\ast A_1=\{A,A_1\}=0.
\]

The quotient $\mathcal A/A_1$ is a
generalized mock-Poisson algebra of
dimension $n-1$. By induction, it admits
a $D$-stable flag with the required properties.
Taking the preimages of its terms and
preceding them by $A_1$ gives the desired
flag of $A$.
\end{proof}

\begin{corollary}\label{cor:no-simple}
Every finite-dimensional generalized mock-Poisson
algebra over $\mathbb C$ of dimension greater
than one has a nonzero proper $D$-stable ideal
annihilated by both operations.
\end{corollary}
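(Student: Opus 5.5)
The statement should follow directly from Corollary~\ref{cor:flag}, and that is the route I would take. Let $\mathcal A$ have dimension $n\ge 2$, and take the $D$-stable flag $0=A_0\subset A_1\subset\cdots\subset A_n=A$ given there. The candidate ideal is $I=A_1$, which is one-dimensional, hence nonzero and proper because $n\ge2$.

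First I would check that $I$ is $D$-stable; this is part of the conclusion of Corollary~\ref{cor:flag}. Next, the inclusion $A\ast A_1+\{A,A_1\}\subseteq A_0=0$ gives $A\ast I=0$ and $\{A,I\}=0$. Anti-commutativity of $\ast$ and symmetry of the bracket then give $I\ast A=0$ and $\{I,A\}=0$ as well. So $I$ is annihilated by both operations. It is trivially closed under both products, so it is an ideal in the sense of Section~\ref{sec2}.

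Alternatively, one can argue without the flag, as in the first step of the proof of Corollary~\ref{cor:flag}. Theorem~\ref{thm:gmp-nilpotent} shows the algebra is nilpotent, so the last nonzero term of the descending series lies in $\operatorname{Ann}(\mathcal A)$, which is therefore nonzero. This annihilator is $D$-stable, because $D$ is a derivation of both operations. Since $\mathbb C$ is algebraically closed, $D$ has an eigenvector $v$ in it, and $\mathbb C v$ is the required ideal.

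The only point needing care is properness, and that is exactly where $\dim A>1$ enters: a one-dimensional subspace cannot equal $A$. So I do not expect any real obstacle; the work was done in Theorem~\ref{thm:gmp-nilpotent}.
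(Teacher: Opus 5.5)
Your proposal is correct and matches the paper's proof: the paper also takes the one-dimensional term $A_1$ of the $D$-stable flag from Corollary~\ref{cor:flag}, and your alternative argument is just the first step of that corollary's proof. Your explicit checks of properness (which uses $\dim A>1$) and of two-sided annihilation (via anti-commutativity of $\ast$ and symmetry of the bracket) fill in details the paper leaves implicit.
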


\begin{proof}
The subspace $A_1$ from
Corollary~\ref{cor:flag} is such an ideal.
\end{proof}

\section{Representations of mock-Poisson and generalized mock-Poisson algebras}\label{sec5}

Representations of mock-Lie algebras have been studied in several papers, and semidirect products give a natural way to describe them \cite{AgoreMilitaru,BenaliChtiouiHajjajiMabrouk}. Since no systematic representation theory for mock-Poisson algebras appears to be available, we begin this section by studying their representations through the semidirect-product construction. We then extend the discussion to generalized mock-Poisson algebras.

\subsection{Representations of mock-Poisson algebras}

Let $\mathcal A=(A,\ast,\{-,-\})$ be a mock-Poisson algebra, $V$ a vector space, and $L,\rho\colon A\longrightarrow\operatorname{End}(V)$ linear maps. We write $L_x=L(x),\; \rho_x=\rho(x)$.

\begin{definition}\label{def:MP-representation}
A \emph{representation} of the mock-Poisson algebra $\mathcal A$ on $V$ is a triple $(V,L,\rho)$ such that
\begin{align}
L_{x\ast y}&=-L_xL_y,\label{eq:MP-rep1}\\
\rho_{\{x,y\}}&=-\rho_x\rho_y-\rho_y\rho_x,\label{eq:MP-rep2}\\
\rho_{x\ast y}
&=-L_x\rho_y+L_y\rho_x,
\label{eq:MP-rep3}\\
\rho_zL_x
&=-L_x\rho_z-L_{\{x,z\}}
\label{eq:MP-rep4}
\end{align}
for all $x,y,z\in A$. Thus \eqref{eq:MP-rep1} says that $L$ is a representation of $(A,\ast)$, and \eqref{eq:MP-rep2} says that $\rho$ is a representation of the mock-Lie algebra $(A,\{-,-\})$. A subspace $W\subseteq V$ is a \emph{subrepresentation} if $L_xW+\rho_xW\subseteq W$ for all $x\in A$. The representation is \emph{irreducible} if $V\neq0$ and it has no subrepresentations other than $0$ and $V$.
\end{definition}

Note that \eqref{eq:MP-rep1} and anti-commutativity of $\ast$ imply $L_xL_y+L_yL_x=0$.

\begin{theorem}\label{thm:MP-semidirect}
Let $\mathcal A=(A,\ast,\{-,-\})$ be a mock-Poisson algebra, $V$ a vector space, and $L,\rho\colon A\longrightarrow\operatorname{End}(V)$ linear maps. Define two bilinear operations on $A\oplus V$ by
\begin{align*}
(x,u)\widetilde\ast(y,v)
&=
\bigl(x\ast y,L_xv-L_yu\bigr),
\nonumber\\
\{(x,u),(y,v)\}_{\sim}
&=
\bigl(\{x,y\},\rho_xv+\rho_yu\bigr).
\end{align*}
Then $(A\oplus V,\widetilde\ast,\{-,-\}_{\sim})$ is a mock-Poisson algebra if and only if $(V,L,\rho)$ is a representation of $\mathcal A$.
\end{theorem}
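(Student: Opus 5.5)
The plan is to check each defining identity of a mock-Poisson algebra on $A\oplus V$ by multilinearity. Each identity is evaluated on triples in which every entry is either purely in $A$ or purely in $V$.

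On triples from $A$ the identities hold because $\mathcal A$ is mock-Poisson. Every operation has $V\,\widetilde\ast\, V=0$ and $\{V,V\}_\sim=0$. Hence any triple with two or more entries in $V$ gives zero on both sides. It therefore suffices to look at triples with exactly one entry in $V$ and read off the $V$-component. Commutativity of the bracket and anti-commutativity of $\widetilde\ast$ are immediate from the formulas, and impose no conditions.

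For anti-associativity I will evaluate $((x,0)\widetilde\ast(y,0))\widetilde\ast(0,v)$ and $(x,0)\widetilde\ast((y,0)\widetilde\ast(0,v))$. Their $V$-components are $L_{x\ast y}v$ and $L_xL_yv$, so anti-associativity at this triple is exactly \eqref{eq:MP-rep1}. The other placements of $v$ reduce to this one by anti-commutativity, using the remark $L_xL_y+L_yL_x=0$, which follows from \eqref{eq:MP-rep1}.

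For the mock-Jacobi identity on $(x,y,v)$, the $V$-component is $\rho_{\{x,y\}}v+\rho_y\rho_xv+\rho_x\rho_yv$. Its vanishing is \eqref{eq:MP-rep2}, and this expression is symmetric in $x,y$, so all placements of $v$ give the same condition.

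The compatibility (anti-Leibniz) identity $\{X\widetilde\ast Y,Z\}=-X\widetilde\ast\{Y,Z\}-\{X,Z\}\widetilde\ast Y$ is not symmetric in its arguments. So all three placements of $v$ must be checked; I expect this bookkeeping to be the main obstacle.

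\begin{itemize}
\item With $v$ in the $Z$ slot, the $V$-components give $\rho_{x\ast y}v=-L_x\rho_yv+L_y\rho_xv$, which is \eqref{eq:MP-rep3}.
\item With $v$ in the $X$ slot, the left side is $-\rho_zL_yv$. The right side is $L_{\{y,z\}}v+L_y\rho_zv$. Renaming $y$ as $x$, this is \eqref{eq:MP-rep4}.
\item With $v$ in the $Y$ slot, the left side is $\rho_zL_xv$. The right side is $-L_x\rho_zv-L_{\{x,z\}}v$. This is again \eqref{eq:MP-rep4}.
\end{itemize}

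Thus the second and third placements are equivalent to each other. The care needed is in tracking the signs of $L_xv-L_yu$ in each evaluation.

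Collecting the computations, each identity for $A\oplus V$ holds if and only if the corresponding condition among \eqref{eq:MP-rep1}--\eqref{eq:MP-rep4} holds. Every representation condition arises in at least one case, which gives both directions of the equivalence at once.
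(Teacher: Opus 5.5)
Your proposal is correct and is essentially the paper's proof. Evaluating each identity on triples with exactly one $V$-entry is the same as the paper's comparison of the coefficients of $u,v,w$ in the $V$-components, and your results for the three placements in the compatibility identity match the paper's: \eqref{eq:MP-rep3}, \eqref{eq:MP-rep4}, and \eqref{eq:MP-rep4} again with $x$ replaced by $y$.
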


\begin{proof}
Let us set
\[
X=(x,u),\qquad Y=(y,v),\qquad Z=(z,w).
\]
Clearly $\widetilde\ast$ is anti-commutative and $\{-,-\}_\sim$ is symmetric. 

\emph{Anti-associativity.} The $V$-component of $(X\widetilde\ast Y)\widetilde\ast Z+X\widetilde\ast(Y\widetilde\ast Z)$ equals
\[
(L_{x\ast y}+L_xL_y)w-(L_zL_x+L_xL_z)v+(L_zL_y-L_{y\ast z})u .
\]
If \eqref{eq:MP-rep1} holds, then all three coefficients vanish since $L_xL_z+L_zL_x=0$ and $L_{y\ast z}=-L_yL_z=L_zL_y$. Conversely, vanishing of the coefficient of $w$ gives \eqref{eq:MP-rep1}.

\emph{Jacobi identity.} The $V$-component of $\{\{X,Y\}_\sim,Z\}_\sim+\{\{Y,Z\}_\sim,X\}_\sim+\{\{Z,X\}_\sim,Y\}_\sim$ has the coefficient $\rho_{\{x,y\}}+\rho_x\rho_y+\rho_y\rho_x$ at $w$. The coefficients at $u$ and $v$ are obtained from it by cyclic permutations of the variables. Hence the Jacobi identity holds if and only if \eqref{eq:MP-rep2} holds.

\emph{Compatibility.} The $V$-component of $\{X\widetilde\ast Y,Z\}_{\sim}$ is
\[
\rho_{x\ast y}w+\rho_zL_xv-\rho_zL_yu.
\]
On the other hand, the $V$-component of $-X\widetilde\ast\{Y,Z\}_{\sim}
-\{X,Z\}_{\sim}\widetilde\ast Y$ is
\[
(-L_x\rho_y+L_y\rho_x)w
+
(-L_x\rho_z-L_{\{x,z\}})v
+
(L_y\rho_z+L_{\{y,z\}})u.
\]
Hence comparison of the coefficients of $w$ and $v$ gives \eqref{eq:MP-rep3} and \eqref{eq:MP-rep4}, respectively. The coefficient of $u$ gives the same identity \eqref{eq:MP-rep4} with $x$ replaced by $y$.

Thus $A\oplus V$ is a mock-Poisson algebra if and only if \eqref{eq:MP-rep1}--\eqref{eq:MP-rep4} hold.
\end{proof}

The resulting mock-Poisson algebra is denoted by $A\ltimes_{(L,\rho)}V$ and is called the \emph{semidirect product} associated to the representation $(V,L,\rho)$.

\begin{example}
Let $(A,\ast,\{-,-\})$ be a mock-Poisson algebra. Define $\mathrm{ad_1}, \mathrm{ad_2} :A \rightarrow  \mathfrak{gl}(A)$ by
\[
{\mathrm{ad}_1}_x(y)=x\ast y,
\qquad
{\mathrm{ad}_2}_x(y)=\{x,y\}.
\]
Then $(A,\mathrm{ad}_1,\mathrm{ad}_2)$ is a representation of $A$, which is called the adjoint representation.
\end{example}

\subsection{Representations of generalized mock-Poisson algebras}

We now turn to representations of generalized mock-Poisson algebras. Let 
$\mathcal{A}=(A,\ast,\{-,-\},D)$ be such an algebra. Besides the two linear maps $L,\rho\colon A\longrightarrow\operatorname{End}(V)$, the presence of $D$ forces us to introduce one more linear operator $d\in\operatorname{End}(V)$, which plays the role of the derivation. 
\begin{definition}\label{def:GMP-representation}
Let $(A,\ast,\{-,-\},D)$ be a generalized mock-Poisson algebra. A \emph{representation} of $\mathcal A$ is a quadruple $(V,L,\rho,d)$ such that \eqref{eq:MP-rep1}, \eqref{eq:MP-rep2} hold (i.e., $L$ is a representation of $(A,\ast)$ and $\rho$ is a representation of $(A,\{-,-\})$) and
\begin{align}
dL_x
&=L_{D(x)}+L_xd,
\label{eq:GMP-rep3}\\
d\rho_x
&=\rho_{D(x)}+\rho_xd,
\label{eq:GMP-rep4}\\
\rho_{x\ast y}
&=-L_x\rho_y+L_y\rho_x+L_{x\ast y}d,
\label{eq:GMP-rep5}\\
\rho_zL_x
&=-L_x\rho_z-L_{\{x,z\}}-L_{D(z)}L_x
\label{eq:GMP-rep6}
\end{align}
for all $x,y,z\in A$. Subrepresentations are $d$-stable subspaces $W\subseteq V$ with $L_xW+\rho_xW\subseteq W$ and irreducibility is defined as above.
\end{definition}

\begin{theorem}\label{thm:GMP-semidirect}
Let $(A,\ast,\{-,-\},D)$ be a generalized mock-Poisson algebra. For a vector space $V$, linear maps $L,\rho\colon A\longrightarrow\operatorname{End}(V)$ and $d\in\operatorname{End}(V)$, define
\begin{align*}
(x,u)\widetilde\ast(y,v)
&=
\bigl(x\ast y,L_xv-L_yu\bigr),\\
\{(x,u),(y,v)\}_{\sim}
&=
\bigl(\{x,y\},\rho_xv+\rho_yu\bigr),\\
\widetilde D(x,u)
&=
\bigl(D(x),d(u)\bigr).
\end{align*}
Then $(A\oplus V,\widetilde\ast,\{-,-\}_{\sim},\widetilde D)$ is a generalized mock-Poisson algebra if and only if $(V,L,\rho,d)$ is a representation of $(A,\ast,\{-,-\},D)$.
\end{theorem}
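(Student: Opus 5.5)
The proof will follow the pattern of Theorem~\ref{thm:MP-semidirect}. We write $X=(x,u)$, $Y=(y,v)$, $Z=(z,w)$ and compare $V$-components of each defining identity; the $A$-components hold because $\mathcal A$ is a generalized mock-Poisson algebra. Anti-commutativity of $\widetilde\ast$ and symmetry of $\{-,-\}_\sim$ are immediate. The proof of Theorem~\ref{thm:MP-semidirect} already shows that anti-associativity of $\widetilde\ast$ is equivalent to \eqref{eq:MP-rep1}, and that the Jacobi identity for $\{-,-\}_\sim$ is equivalent to \eqref{eq:MP-rep2}; $D$ plays no role in these two steps.

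Next we treat the derivation conditions. The $V$-component of $\widetilde D(X\widetilde\ast Y)-\widetilde D(X)\widetilde\ast Y-X\widetilde\ast\widetilde D(Y)$ is
\[
(dL_x-L_{D(x)}-L_xd)v-(dL_y-L_{D(y)}-L_yd)u.
\]
It vanishes for all $u,v$ exactly when \eqref{eq:GMP-rep3} holds. The analogous computation for the bracket gives \eqref{eq:GMP-rep4}.

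The main step is the compatibility identity. We need the $V$-component of the extra term $(X\widetilde\ast Y)\widetilde\ast\widetilde D(Z)$, which is
\[
L_{x\ast y}d(w)-L_{D(z)}(L_xv-L_yu).
\]
Adding this to the expression already computed in the proof of Theorem~\ref{thm:MP-semidirect}, we compare coefficients with the $V$-component $\rho_{x\ast y}w+\rho_zL_xv-\rho_zL_yu$ of $\{X\widetilde\ast Y,Z\}_\sim$. The coefficient of $w$ gives \eqref{eq:GMP-rep5}. The coefficient of $v$ gives \eqref{eq:GMP-rep6}. The coefficient of $u$ gives \eqref{eq:GMP-rep6} again, with $x$ replaced by $y$ and an overall sign change. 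Hence compatibility holds if and only if \eqref{eq:GMP-rep5} and \eqref{eq:GMP-rep6} hold.

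The only delicate point is the sign bookkeeping in the $u$-coefficient, namely checking that it is exactly the negative of \eqref{eq:GMP-rep6} at $y$ and imposes no new condition. For each identity, the "if and only if" follows because $u,v,w$ range independently over $V$, so the operator coefficients must vanish separately. Combining all cases shows that $A\oplus V$ is a generalized mock-Poisson algebra if and only if \eqref{eq:MP-rep1}, \eqref{eq:MP-rep2} and \eqref{eq:GMP-rep3}--\eqref{eq:GMP-rep6} hold, which is the definition of a representation.
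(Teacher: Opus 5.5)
Your proposal is correct and follows essentially the same argument as the paper. Anti-associativity and the Jacobi identity are reduced to Theorem~\ref{thm:MP-semidirect}, the derivation conditions give \eqref{eq:GMP-rep3}--\eqref{eq:GMP-rep4}, and in the compatibility identity the extra $V$-component $L_{x\ast y}d(w)-L_{D(z)}(L_xv-L_yu)$ yields \eqref{eq:GMP-rep5} and \eqref{eq:GMP-rep6} from the $w$- and $v$-coefficients, with the $u$-coefficient reproducing \eqref{eq:GMP-rep6} at $y$ up to the sign you correctly identified.
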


\begin{proof}
As in the proof of Theorem~\ref{thm:MP-semidirect}, $\widetilde\ast$ is anti-associative and $\{-,-\}_\sim$ satisfies the Jacobi identity if and only if \eqref{eq:MP-rep1} and \eqref{eq:MP-rep2} hold.

The condition that $\widetilde D$ be a derivation of $\widetilde\ast$ and of $\{-,-\}_\sim$ is equivalent to \eqref{eq:GMP-rep3} and \eqref{eq:GMP-rep4} by comparing the $V$-components.

It remains to verify the generalized mock-Poisson identity. 
\[
\{X\widetilde\ast Y,Z\}_{\sim}
=
-X\widetilde\ast\{Y,Z\}_{\sim}
-\{X,Z\}_{\sim}\widetilde\ast Y
+(X\widetilde\ast Y)\widetilde\ast\widetilde D(Z).
\]
Let us set $X=(x,u),\; Y=(y,v),\; Z=(z,w)$. The $V$-component of the last term is $L_{x\ast y}d\,w-L_{D(z)}(L_xv-L_yu)$. By comparing the coefficient of $w$ and $v$ in the $V$-component, we obtain \eqref{eq:GMP-rep5} and \eqref{eq:GMP-rep6}, respectively. The coefficient of $u$ gives the same identity \eqref{eq:GMP-rep6}  with $x$ replaced by $y$. This
completes the proof.
\end{proof}

We denote the resulting generalized mock-Poisson algebra by $A\ltimes_{(L,\rho,d)}V$.

\begin{corollary}\label{cor:irreducible-representations}
Let $\mathcal A=(A,\ast,\{-,-\},D)$ be a finite-dimensional generalized
mock-Poisson algebra over $\mathbb C$ and let $(V,L,\rho,d)$ be a nonzero
finite-dimensional irreducible representation of $\mathcal A$. Then
\[
\dim V=1,\qquad L=0,\qquad \rho=0,
\]
and $d$ acts by a scalar.
\end{corollary}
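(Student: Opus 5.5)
The plan is to pass to the semidirect product $B=A\ltimes_{(L,\rho,d)}V$ of Theorem~\ref{thm:GMP-semidirect}. By that theorem, $B$ is a finite-dimensional generalized mock-Poisson algebra over $\mathbb C$ with derivation $\widetilde D=(D,d)$, and $V=0\oplus V$ is a $\widetilde D$-stable ideal of $B$. By Theorem~\ref{thm:gmp-nilpotent}, $B$ is nilpotent. The goal is to use this nilpotency to produce a proper nonzero subrepresentation of $V$ unless $L=\rho=0$.

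First I would show that the operators $L_x,\rho_x$ generate a nilpotent associative subalgebra of $\operatorname{End}(V)$. Indeed, for $v\in V$ the element $L_{x_1}\cdots L_{x_k}v$, or more generally any word in the $L_{x_i},\rho_{x_i}$ applied to $v$, equals (up to sign) the $V$-component of a mixed monomial of degree $k+1$ in $B$. The relevant monomial is $(x_1,0)\widetilde\ast\bigl(\cdots\widetilde\ast(0,v)\bigr)$, with $\{-,-\}_\sim$ used in place of $\widetilde\ast$ where $\rho$ occurs. Concretely, $(x,0)\widetilde\ast(0,v)=(0,L_xv)$ and $\{(x,0),(0,v)\}_\sim=(0,\rho_xv)$. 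Hence every word of length $\ge N-1$ vanishes, where $B^{\langle N\rangle}=0$.

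Next, set $W=\sum_{x}(L_xV+\rho_xV)$. This $W$ is invariant under all $L_y,\rho_y$. It is also $d$-stable, by \eqref{eq:GMP-rep3} and \eqref{eq:GMP-rep4}: for instance $dL_xv=L_{D(x)}v+L_x dv\in W$. Because the generated algebra is nilpotent, $W\neq V$ whenever $V\neq0$: otherwise iterating gives $V=W=\mathcal{W}^kV$ for every $k$, where $\mathcal{W}$ is the span of the operators $L_x,\rho_x$, forcing $V=0$. Irreducibility then gives $W=0$, that is, $L=\rho=0$. The representation now reduces to a $d$-stable subspace condition only. Since $\mathbb C$ is algebraically closed, $d$ has an eigenvector $v$, and $\mathbb Cv$ is a subrepresentation. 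Irreducibility forces $V=\mathbb Cv$, so $\dim V=1$ and $d$ acts by a scalar.

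The main obstacle is the first step: checking carefully that an arbitrary word in the $L$'s and $\rho$'s really is the $V$-component of a mixed monomial in $B$ of the stated degree, with no hidden extra terms. The extra terms cannot arise here because the $A$-components are zero. I would verify this by induction on word length using the explicit formulas for $\widetilde\ast$ and $\{-,-\}_\sim$ on elements $(x,0)$ and $(0,v)$. No use of $\widetilde D$ is needed beyond its stability properties.
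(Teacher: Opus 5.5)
Your proof is correct and follows the paper's argument. You pass to $A\ltimes_{(L,\rho,d)}V$, use Theorem~\ref{thm:gmp-nilpotent} to see that long words in the $L_x,\rho_x$ annihilate $V$, obtain a $d$-stable subrepresentation via \eqref{eq:GMP-rep3}--\eqref{eq:GMP-rep4}, and finish with an eigenvector of $d$. The only variation is the choice of subrepresentation: you use the image $L_AV+\rho_AV$, which is proper because $V=\mathcal W V$ would force $V=\mathcal W^kV=0$. The paper instead uses the joint kernel $V_0=\{v:\ L_xv=\rho_xv=0\}$, which is nonzero because it contains the last nonzero iterated image $V^{(k)}$; the two choices are dual and equally short.
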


\begin{proof}
By Theorem~\ref{thm:GMP-semidirect}, $A\ltimes_{(L,\rho,d)}V$ is a
finite-dimensional generalized mock-Poisson algebra, hence nilpotent with
respect to mixed products by Theorem~\ref{thm:gmp-nilpotent}. Put
$V^{(0)}=V$ and $V^{(k+1)}=L_AV^{(k)}+\rho_AV^{(k)}$. Since $V$ is an abelian
ideal, one sees by induction that $V^{(k)}$ is contained in the $(k+1)$-st term of the descending series of the semidirect product, so nilpotency gives $V^{(N)}=0$ for some $N$. Taking the last nonzero
$V^{(k)}$, we get
\[
V_0:=\{v\in V:\ L_xv=0=\rho_xv\ \text{for all }x\in A\}\supseteq V^{(k)}\neq0.
\]
By \eqref{eq:GMP-rep3}--\eqref{eq:GMP-rep4}, $V_0$ is $d$-stable (if $v\in V_0$, then $L_xdv=dL_xv-L_{D(x)}v=0$ and similarly for $\rho_x$), so $V_0$ is a
nonzero subrepresentation. Irreducibility gives $V_0=V$, i.e.\ $L=\rho=0$.
Then every $d$-stable subspace of $V$ is a subrepresentation. Since
$\mathbb C$ is algebraically closed, $d$ has an eigenvector $v$.
Thus $V=\mathbb Cv$ and $d$ acts by a scalar.
\end{proof}

\begin{example}
Let $\mathcal A=(A,\ast,\{-,-\},D)$ be a generalized mock-Poisson algebra. Define $\mathrm{ad_1}, \mathrm{ad_2} :A \rightarrow  \mathfrak{gl}(A)$ by
\[
{\mathrm{ad}_1}_x(y)=x\ast y,
\qquad
{\mathrm{ad}_2}_x(y)=\{x,y\},
\]
and set $d=D$. Then $(A,\mathrm{ad}_1,\mathrm{ad}_2,d)$ is a representation of $A$, which we call the adjoint representation. Indeed, \eqref{eq:MP-rep1} and \eqref{eq:MP-rep2} are anti-associativity and the Jacobi identity, \eqref{eq:GMP-rep3} and \eqref{eq:GMP-rep4} express that $D$ is a derivation, and \eqref{eq:GMP-rep5}, \eqref{eq:GMP-rep6} are the compatibility condition \eqref{eq:gmp-nil-compatibility} evaluated at $(x,y,z)$ and at $(x,w,z)$, respectively.
\end{example}

Note that if $D=0$ and $d=0$, then Definition~\ref{def:GMP-representation} reduces exactly to Definition~\ref{def:MP-representation}. Thus, representations of generalized mock-Poisson algebras form a natural extension of the representation theory introduced above for mock-Poisson algebras.

\end{document}